\newcommand{\R}{\ensuremath{\mathbb{R}}}
\newcommand{\N}{\ensuremath{\mathbb{N}}}
\newcommand{\PP}{\ensuremath{\mathbb{P}}}
\newcommand{\E}{\ensuremath{\mathbb{E}}}
\newcommand{\dint}{{\rm d}}
\newcommand{\norm}[1]{\left\Vert#1\right\Vert}
\renewcommand{\rho}{\varrho}
\DeclareMathOperator{\APP}{APP}
\newcommand{\set}[1]{\left\{#1\right\}}
\newcommand{\abs}[1]{\left|#1\right|}
\newcommand{\brackets}[1]{\left(#1\right)}
\renewcommand{\phi}{{\varphi}}
\DeclareMathOperator{\dist}{dist}
\DeclareMathOperator{\argmin}{argmin}
\newtheorem{thm}{Theorem}
\theoremstyle{plain}
\newtheorem{lemma}[thm]{Lemma}
\newtheorem{prop}[thm]{Proposition}
\theoremstyle{definition}
\newtheorem{rem}[thm]{Remark}
\title[On the power of random information]{On the power of random information} 
\author{Aicke Hinrichs 
\and David Krieg 
\and Erich Novak 
\and Joscha Prochno 
\and Mario Ullrich}
\address[A.~Hinrichs, M.~Ullrich]{Institut f\"ur Analysis,
Johannes Kepler Universit\"at Linz,
Altenbergerstrasse 69, 4040 Linz, Austria}
\address[D.~Krieg, E.~Novak]{Mathematisches Institut, Universit\"at Jena, 
Ernst-Abbe-Platz~2, 07743 Jena, Germany}
\address[J.~Prochno]{Institut f\"ur Mathematik \& Wissenschaftliches Rechnen, 
Karl-Franzens-Universit\"at Graz, 
Heinrichstrasse 36, 8010 Graz, Austria}
\date{\today}
\begin{document}

\begin{abstract}
We study approximation and integration problems
and compare the quality  of optimal information
with the quality of random information.
For some problems random information
is almost optimal and for some other problems random
information is much worse than optimal information.
We prove new results and give a short survey 
of known results.
\end{abstract}

\maketitle

\section{Introduction} 


Optimal algorithms use optimal information 
about the problem instances.
Quite often, we do not have access to optimal information. 
One reason may be that 
we do not know how to choose the most significant measurements.
In this paper, we assume that information
comes in randomly, which is a standard assumption
in learning theory and uncertainty quantification.
We ask the following question:
\begin{center}
What is the typical quality 
of random information?
\end{center}
Of course, random information cannot be better than optimal information.
In many cases, however, it will turn out that
random information is practically as good as optimal information while
in other situations random information is a little worse.
Sometimes, random information is completely useless.

To phrase our question precisely,
we need to clarify how we measure the
quality of information 
and what we mean by random.
The first is done with the so-called
radius of information,
which is the worst case error of the best algorithm
that uses nothing but the given information
and the a priori knowledge about the
problem instance.
Random information, on the other hand, 
shall be obtained from
a certain number of independent measurements
that all follow the same law.
We study the question for two different natural choices of 
uniform distributions.

Let us go a little more into detail.
A linear problem is given by
a linear solution operator $S$ that maps from a convex and symmetric subset $F$ 
of a normed space to a normed space $G$
and a class $\Lambda$ of continuous linear functionals on $F$,
the class of admissible measurements.
We may think of an integration problem,
where $S(f)$ is the integral of a function $f$,
or a recovery problem,
where $S$ is an embedding.
One wants to approximate the solution $S(f)$
for unknown $f\in F$
based on $n$ of these measurements
such that we can guarantee a small error with
respect to the norm in $G$.
We refer the reader to
\cite{NW08,TWW88}
for a detailed exposition.
An information mapping has the form
\begin{equation}
\label{eq:information_mapping}
 N_n:F\to \R^n,
 \quad
 N_n(f)=\big(L_1(f),\hdots,L_n(f)\big),
\end{equation}
where $L_1,\dots,L_n\in\Lambda$.
The power or quality of the information mapping is measured by the
radius of information.
This is the worst case error of the best algorithm
$A_n=\phi\circ N_n$ based on this information, i.e.,
\begin{equation}
 r(N_n) = 
 \inf_{\phi: \R^n\to G} \sup_{f\in F} \norm{S(f)-\phi(N_n(f))}_G.
\end{equation}
We say that information is optimal, and write $N_n^*$ instead of $N_n$, if
\begin{equation}
 r(N_n^*) = \inf_{N_n} r(N_n).
\end{equation}
Here we study random information
of the form \eqref{eq:information_mapping},
where the random functionals $L_1,\dots,L_n\in\Lambda$
are independent and identically distributed.
The goal is to compare 
$$
 r(N_n^*)
 \qquad\text{vs.}\qquad
 \E [r(N_n)],
$$
the radius of optimal information
and the expected radius of random information.

If the infimum and the expected value are comparable,
this means that there are many good algorithms
based on many different information mappings.
In this case, 
optimal information
and therefore optimal algorithms are
not very special.
On the other hand, if the infimum is significantly smaller
than the expected value,
this means that optimal information is indeed special.
It seems to be an interesting characteristic
of a problem whether 
optimal information is special or not. Of course, the results of our comparisons
heavily depend on the distribution
underlying the measurements.
While the question may be interesting for many distributions,
we feel that there often is a natural choice.
If the distribution only depends on the class $\Lambda$
of admissible measurements, 
collecting random information 
might even be a good idea if optimal information is available.
It may happen that we do not loose much in
terms of the radius but gain the following useful properties:
\begin{itemize}
 \item Since the distribution is independent of $n$, 
 it is easy to increase the number of
 measurements if our current approximation is not yet
 satisfactory.
 \item The information can be used for
 many different input classes $F$,
 solution operators $S$, and target spaces $G$.
 In that sense it is \textit{universal}.
\end{itemize}
The second property
does not mean that the corresponding algorithm $A_n=\phi\circ N_n$
is universal. The optimal choice of $\phi$ 
may depend on $F$, $S$, and $G$.
It is well known, see again \cite{TWW88}, that any 
interpolatory algorithm is optimal up to a factor two. 

We note that examples of the sort
`\textit{random information is good}' 
can be deduced from work using the probabilistic method 
to prove upper bounds of $r(N_n^*)$. 
Here one uses upper bounds of 
$\E [r(N_n)]$ and the trivial inequality 
$r(N_n^*)  \le \E [r(N_n)]$, 
see
\cite{HNWW01,NW08,NW10,NW12,SW98}
for many such results.
The idea is to introduce a random
family of algorithms
and to show that the expected worst case error
is small.
This is used to obtain the existence of good algorithms.
However, it actually implies that \textit{most}
of the algorithms in that family are good.
Therefore, the expected radius of the 
random information underlying these algorithms must be small as well.
We note that for this approach only upper bounds of 
$\E [r(N_n)]$ are needed and lower bounds are usually not studied. 

\section{Uniformly distributed function evaluations} \label{s2}

If the input class $F$ consists of functions, one natural restriction of the class of information functionals is so called \textit{standard information} consisting of function evaluations. Moreover, if the domain of definition of functions in $F$ carries a probability measure, a natural source of random information are function values $f(x)$ with $x$ distributed according to the probability measure. In this section, we analyze this setting for different input classes  $F$ of smooth functions defined on the unit cube $[0,1]^d$. Then the random information consists of function values $f(x)$ with $x$ uniformly distributed in $[0,1]^d$.

\subsection{Integration and Approximation in univariate Sobolev spaces of smoothness 1} 
\label{s2.1}

We consider the $L_q$-approximation problem 
$$
\APP  \colon  W_p^1 ([0,1]) \to L_q ([0,1])
$$
given by $\APP(f)=f$ using function values as information. 
The Sobolev spaces $W_p^1 ([0,1])$ are usually normed by 
\[
\|f\|_{W^1_p([0,1])} \,:=\,  \left(\|f\|_p^p + \|f'\|_p^p\right)^{1/p}.
\] 
It is well known that the optimal information is given by
$$
N_n^\ast (f) = \big( f(1/n), f(3/n), \dots, f((2n-1)/n) \big),  
$$
i.e., point evaluations at equidistant points. 
The minimal radius of information then satisfies
$$
r(N_n^\ast) \asymp n^{-1 + (1/p-1/q)_+} ,
$$
see \eqref{triebel}. 

Here and later we use asymptotic notation as follows. For sequences $(a_n)$ 
and $(b_n)$ of positive real numbers, we write $a_n \preccurlyeq b_n$ and 
$b_n \succcurlyeq a_n$ to indicate that there exists a constant 
$C\in(0,\infty)$ such that $a_n \leq C\, b_n$ for all $n$. 
We shall write $a_n \asymp b_n$ if $a_n \preccurlyeq b_n$ and 
$b_n \preccurlyeq a_n$. Sometimes we use similar notation for double sequences.

We will show that information given by $n$ independent and uniformly distributed 
points in $[0,1]$ is as good as optimal information provided that $p>q$, whereas
in the case $p\le q$ there is a loss of a logarithmic factor.

\begin{thm} \label{thm:appd1s1}
Consider $L_q$-approximation of functions from $W^s_p([0,1])$ 
with $1\le p,q \le \infty$, 
using function values at uniformly distributed points in $[0,1]$.
 \[
  \E [r(N_n)] \asymp
 \begin{cases}
 r(N_n^\ast) \asymp n^{-1} & \text{if} \quad p>q \\
 & \\
 r(N_{n/\log n}^\ast) \asymp \left( \frac{n}{\log n} 
  \right)^{-1 + 1/p-1/q} & \text{if} \quad p \le q.
 \end{cases}
 \]
\end{thm}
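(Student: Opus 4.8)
The plan is to establish matching two–sided estimates for $\E[r(N_n)]$ in the two regimes; the lower bounds are comparatively soft and the upper bounds carry the weight. Write $0 =: x_{(0)} < x_{(1)} < \dots < x_{(n)} < x_{(n+1)} := 1$ for the ordered sample points together with the two endpoints, let $I_j := [x_{(j-1)}, x_{(j)}]$ be the induced partition with lengths $h_j := |I_j|$ (the \emph{spacings}), and put $\gamma := 1 - 1/p + 1/q$. Two elementary facts about uniform spacings will be used: (i) the largest spacing satisfies $\max_j h_j \asymp (\log n)/n$ in expectation, and $\PP(\max_j h_j \ge c\,(\log n)/n)$ is bounded away from $0$ for a suitable $c > 0$; and (ii) each spacing is $\mathrm{Beta}(1,n)$–distributed, so $\E h_j^\delta = \Gamma(\delta+1)\Gamma(n+1)/\Gamma(n+1+\delta) \asymp n^{-\delta}$ for fixed $\delta \ge 0$, whence $\E\sum_j h_j^\delta \asymp n^{1-\delta}$.

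The deterministic tool for the upper bounds is a local interpolation estimate. For $f$ in the unit ball of $W^1_p([0,1])$, let $Lf$ be the piecewise affine interpolant of $f$ at $x_{(0)},\dots,x_{(n+1)}$, extended by the constants $f(x_{(1)})$ and $f(x_{(n)})$ on the two boundary intervals. Using that $f - Lf$ vanishes at the endpoints of $I_j$ for interior $j$ (with a one–sided version covering $j \in \{1,n+1\}$) together with Hölder and Poincaré on an interval, one gets $\|f - Lf\|_{L_q(I_j)} \le C\, h_j^{\gamma}\, \|f'\|_{L_p(I_j)}$. Setting $\eps_j := \|f'\|_{L_p(I_j)}$, so that $\sum_j \eps_j^p \le 1$, and summing $q$–th powers over $j$ (with the obvious modification when $q = \infty$) yields
\[
 r(N_n) \;\le\; C \Big(\sup\nolimits_{\sum_j \eps_j^p \le 1} \; \sum_j h_j^{\gamma q}\, \eps_j^{q}\Big)^{1/q}.
\]
For $p \le q$ bound the bracket by $(\max_j h_j)^{\gamma q}(\sum_j \eps_j^p)^{q/p} \le (\max_j h_j)^{\gamma q}$ (using $\ell_p \hookrightarrow \ell_q$), so $r(N_n) \le C(\max_j h_j)^{\gamma}$ pointwise; taking expectations and invoking (i) (with concavity of $t \mapsto t^{\gamma}$, $\gamma \le 1$) gives $\E[r(N_n)] \preccurlyeq ((\log n)/n)^{\gamma} \asymp r(N_{n/\log n}^\ast)$. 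For $p > q$ apply instead Hölder with exponents $p/q$ and $p/(p-q)$: $\sum_j h_j^{\gamma q}\eps_j^q \le (\sum_j h_j^{\delta})^{(p-q)/p}$ with $\delta := \gamma q p/(p-q)$, so $r(N_n) \le C(\sum_j h_j^\delta)^{(p-q)/(pq)}$; here $\delta > \gamma > 1$ (using $q \ge 1$) and the exponents obey $(\delta - 1)\tfrac{p-q}{pq} = 1$. Taking expectations, using Jensen (the exponent $\tfrac{p-q}{pq}$ is $< 1$) and fact (ii), $\E[r(N_n)] \preccurlyeq (n^{1-\delta})^{(p-q)/(pq)} = n^{-1} \asymp r(N_n^\ast)$.

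The lower bounds are easy. For $p > q$ it suffices to note $\E[r(N_n)] \ge r(N_n^\ast) \asymp n^{-1}$. For $p \le q$, fix a realization, let $I_{j_0}$ be a largest spacing, and let $f$ be a fixed bump (say a tent) supported in $I_{j_0}$ rescaled to have $\|f\|_{W^1_p([0,1])} \le 1$; since $h_{j_0} \le 1$ the scaling constant is $\asymp h_{j_0}^{\,1-1/p}$, hence $\|\APP f\|_{L_q} = \|f\|_{L_q} \asymp h_{j_0}^{\gamma}$. As $f$ vanishes at every $x_i$ we have $N_n(f) = 0$, and the standard reduction $r(N_n) \ge \sup\{\|\APP g\|_{L_q} : g \in F,\, N_n(g) = 0\}$ (symmetry and convexity of $F$) gives $r(N_n) \succcurlyeq (\max_j h_j)^{\gamma}$ pointwise; fact (i) then yields $\E[r(N_n)] \succcurlyeq ((\log n)/n)^{\gamma}$. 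Finally, the identifications $r(N_n^\ast) \asymp n^{-1}$ (for $p > q$) and $r(N_{n/\log n}^\ast) \asymp (n/\log n)^{-1+1/p-1/q}$ (for $p \le q$) are just the formula $r(N_m^\ast) \asymp m^{-1+(1/p-1/q)_+}$ at $m = n$ and $m = n/\log n$.

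I expect the crux to be the upper bound for $p > q$. The naive route — bounding $\sum_j h_j^\delta \le (\max_j h_j)^{\delta - 1}\sum_j h_j$ and using (i) — produces $r(N_n) \le C(\log n)/n$ and loses a logarithm. Removing it relies on (a) using the full $\ell_{p/(p-q)}$ strength of the Hölder step rather than an $\ell_\infty$ bound on the spacings, (b) the exact moment asymptotics $\E\sum_j h_j^\delta \asymp n^{1-\delta}$, reflecting that only $O(\log n)$ spacings are atypically large, and (c) the exponent bookkeeping forcing exactly $n^{-1}$; the order in which one takes the supremum over $f$, the expectation over the sample, and the concave power $(\cdot)^{(p-q)/(pq)}$ must be chosen so that Jensen applies in the right direction.
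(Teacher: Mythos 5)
Your argument is correct and follows essentially the same route as the paper: the same deterministic reduction of $r(N_n)$ to the spacings (the maximal spacing to the power $1-1/p+1/q$ when $p\le q$, and the H\"older bound with exponent $\delta=(pq+p-q)/(p-q)=s+1$ and outer power $1/s=1/q-1/p$ when $p>q$), followed by the same probabilistic inputs (maximal spacing of order $\log n/n$ with the needed constant-probability lower deviation, Jensen with the concave power $1/s\le 1$, and $\E\big[\sum_j h_j^{s+1}\big]\asymp n^{-s}$). The only cosmetic differences are that you bound the radius via an explicit piecewise-affine interpolant rather than the zero-information characterization of the radius, and that you obtain the spacing moments from the $\mathrm{Beta}(1,n)$ marginals rather than from Darling's formula, which incidentally lets you skip the paper's monotonicity argument reducing to integer $s$.
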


The proof of Theorem \ref{thm:appd1s1} is based on the following formula for 
the radius of information $r(N_n)$ for information $N_n(f)= \big( f(x_1), f(x_2), \dots, f(x_n) \big)$,
where we assume $0=x_0 \le x_1 \le \dots \le x_n  \le x_{n+1}=1$.

\begin{lemma} \label{lem:appd1s1}
  $$
   r(N_n) \asymp
        \left\{\begin{array}{ll}
\displaystyle \left( \sum_{i=0}^n (x_{i+1}-x_i)^{(pq+p-q)/(p-q)} \right)^{1/q-1/p} & \text{if} \quad p>q\\
       \displaystyle \max_{0 \le i \le n} (x_{i+1}-x_i)^{1-1/p+1/q} &\text{if} \quad  p\le q.\\
\end{array}\right.
$$
\end{lemma}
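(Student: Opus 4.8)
The plan is to replace the radius of information by the more tractable quantity
\[
 e(N_n) \;:=\; \sup\bigl\{\,\norm{f}_{L_q([0,1])} \;:\; f\in W_p^1([0,1]),\ \norm{f}_{W_p^1}\le 1,\ f(x_1)=\dots=f(x_n)=0\,\bigr\}.
\]
Since $\APP$ is linear, the unit ball of $W_p^1([0,1])$ is convex and symmetric, and point evaluations are continuous on $W_p^1([0,1])$ (via the embedding into $C([0,1])$), the standard theory of \cite{TWW88} — in particular that interpolatory algorithms are optimal up to a factor two, together with the trivial bound obtained from two antipodal consistent elements — gives $e(N_n)\le r(N_n)\le 2\,e(N_n)$. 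Hence it suffices to bound $e(N_n)$ above and below by the stated right-hand sides. Throughout write $I_i=[x_i,x_{i+1}]$ and $h_i=x_{i+1}-x_i$ for $i=0,\dots,n$, with $x_0=0$ and $x_{n+1}=1$, and note that any admissible $f$ vanishes at least at one endpoint of every $I_i$ (at an interior node if $0<i<n$, and at $x_1$, resp.\ $x_n$, if $i=0$, resp.\ $i=n$).

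For the upper bound the only analytic input is the Poincaré-type estimate
\[
 \norm{f}_{L_q(I_i)} \;\le\; h_i^{1-1/p+1/q}\,\norm{f'}_{L_p(I_i)},
\]
obtained by writing $f$ on $I_i$ as the integral of $f'$ starting from the endpoint where it vanishes and applying Hölder's inequality twice. Put $b_i:=\norm{f'}_{L_p(I_i)}^p$, so that $\sum_i b_i\le\norm{f'}_{L_p([0,1])}^p\le1$. Summing the $q$-th powers of the estimate gives $\norm{f}_{L_q}^q\le\sum_i h_i^{q(1-1/p+1/q)}b_i^{q/p}$. If $p\le q$, one factors out $\max_i h_i^{q(1-1/p+1/q)}$ and uses that $t\mapsto t^{q/p}$ is convex and vanishes at $0$, so $\sum_i b_i^{q/p}\le\bigl(\sum_i b_i\bigr)^{q/p}\le 1$ (for $q=\infty$ one argues with the maximum over $i$ directly). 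If $p>q$, one instead applies Hölder's inequality for sums with the conjugate exponents $\tfrac{p}{p-q}$ and $\tfrac{p}{q}$; a direct computation of the exponents shows that the first factor equals $\bigl(\sum_i h_i^{(pq+p-q)/(p-q)}\bigr)^{(p-q)/p}$ while the second equals $\bigl(\sum_i b_i\bigr)^{q/p}\le1$. Taking $q$-th roots and using $(p-q)/(pq)=1/q-1/p$ yields both upper bounds.

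For the lower bound one tests with explicit bumps. Fix once and for all a smooth $g\colon[0,1]\to\R$ with $g(0)=g(1)=0$ and $g\not\equiv0$, and on each $I_i$ put $f_i:=\lambda_i\,g\bigl((\,\cdot\,-x_i)/h_i\bigr)$, extended by $0$ outside $I_i$; these have pairwise disjoint supports, vanish at all nodes, and by scaling satisfy $\norm{f_i}_{W_p^1}^p\asymp\lambda_i^p\,h_i^{1-p}$ and $\norm{f_i}_{L_q}^q\asymp\lambda_i^q\,h_i$ (the $L_p$-part of the norm being dominated by the derivative part because $h_i\le1$). In the case $p\le q$ one keeps a single bump on an interval $I_j$ of maximal length and chooses $\lambda_j$ so that $\norm{f_j}_{W_p^1}\asymp1$, i.e.\ $\lambda_j\asymp h_j^{1-1/p}$, which gives $\norm{f_j}_{L_q}\asymp h_j^{1-1/p+1/q}=(\max_i h_i)^{1-1/p+1/q}$. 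In the case $p>q$ one takes $f=\sum_i f_i$; with $a_i:=\lambda_i^p h_i^{1-p}$ the constraint $\norm{f}_{W_p^1}\asymp1$ reads $\sum_i a_i\asymp1$ and $\norm{f}_{L_q}^q\asymp\sum_i a_i^{q/p}\,h_i^{q-q/p+1}$; maximizing this concave functional over the simplex by Lagrange multipliers forces the mesh-adapted choice $a_i\propto h_i^{(pq+p-q)/(p-q)}$ and produces the value $\bigl(\sum_i h_i^{(pq+p-q)/(p-q)}\bigr)^{(p-q)/p}$. After rescaling $f$ into the unit ball this lower bound matches the upper one.

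Most of the work is bookkeeping: the two exponent computations that make $(pq+p-q)/(p-q)$ and the outer exponent $1/q-1/p$ appear, the disjointness and vanishing checks for the test functions, and the degenerate cases $p\in\{1,\infty\}$ or $q\in\{1,\infty\}$ (where, e.g.\ for $p=\infty$ the Sobolev norm is a maximum and the bumps become tent functions of fixed slope), all of which go through with the usual conventions. The one genuinely substantive point is the finite-dimensional optimization in the case $p>q$: it is exactly the matching choice of weights $a_i\propto h_i^{(pq+p-q)/(p-q)}$ that produces this particular exponent, and explains why this — rather than a weaker estimate — is the correct bound.
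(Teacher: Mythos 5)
Your proposal is correct and follows essentially the same route as the paper: the same reduction of $r(N_n)$ to the supremum over the kernel, the same Poincar\'e-type estimate $\norm{f}_{L_q(I_i)}\le h_i^{1-1/p+1/q}\norm{f'}_{L_p(I_i)}$ with the max/H\"older split for the upper bound, and the same interval-supported bumps for the lower bound. The only difference is cosmetic: the paper writes down the optimal slopes $\pm\ell_i^{q/(p-q)}$ directly where you derive the equivalent weights $a_i\propto h_i^{(pq+p-q)/(p-q)}$ via Lagrange multipliers, and it uses piecewise-linear tents instead of a generic scaled bump.
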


\begin{proof}
 Note that in this case the radius of information satisfies
\[
r(N_n)\asymp \sup_{\substack{f\in W_p^1([0,1])\\ N_n(f)=0}} 
\frac{\norm{f}_q}{\|f\|_{W_p^1}},
\]
with an equivalence constant of at most 2,
see \cite[Section~4.2]{NW08}.
 We start with the lower bound in the case $p \le q$.
 Let $i_0 \in \{0,1, \dots,n\}$ be such that 
 $$
  \ell = x_{i_0+1} - x_{i_0} = \max_{0 \le i \le n} (x_{i+1}-x_i)
 $$ 
 and consider a continuous piecewise linear function $f:[0,1] \to \R$ 
 that is zero outside of the  interval $(x_{i_0},x_{i_0+1})$, satisfies $f'(x)=1$ in the
 left half of this interval and $f'(x)=-1$ in the right half.
 Then 
  $$ 
   N_n(f)=(0,\dots,0), \quad
   \|f\|_q \asymp \ell^{1+1/q}, \quad \text{and} \quad
   \|f\|_{W_p^1} \asymp  \ell^{1/p}
  $$   
  show that $r(N_n) \succcurlyeq \ell^{1-1/p+1/q}$.

  We turn to the lower bound in the case $p > q$.
  In this case, let $f:[0,1] \to \R$ be a continuous piecewise linear function
  satisfying $f(x_i)=0$ for $i=0,1,\dots,n+1$ and $f'(x)=\ell_i^\alpha$ in the
  left half of each interval $[x_i,x_{i+1}]$ and $f'(x)=-\ell_i^\alpha$ in the right half, 
  where $\ell_i=x_{i+1}-x_i$ and $\alpha = q/(p-q)$. Then $N_n(f)=(0,\dots,0)$ and 
  $$ 
   \|f\|_q \asymp \left( \sum_{i=0}^n \ell_i^{(pq+p-q)/(p-q)} \right)^{1/q} \quad \text{and} \quad
   \|f\|_{W_p^1} \asymp  \left( \sum_{i=0}^n \ell_i^{(pq+p-q)/(p-q)} \right)^{1/p}
  $$  
  show that $r(N_n) \succcurlyeq\left( \sum_{i=0}^n \ell_i^{(pq+p-q)/(p-q)} \right)^{1/q-1/p}.$
  
  Finally, we prove the upper bound. 
  Observe that $ \| f \|_\infty \le \|f'\|_1$
  holds for any function $f \in  W_1^1 ([0,1])$ with $f(0)=0$.
  Consequently, $ \| f \|_q \le \|f'\|_p$ holds for any function $f \in  W_p^1 ([0,1])$
  with $f(0)=0$. 
  By scaling, it follows that $ \| f \|_q \le  (b-a)^{1-1/p+1/q} \|f'\|_p$ 
  holds for any function $f \in  W_p^1 ([a,b])$
  satisfying either $f(a)=0$ or $f(b)=0$.
  Let now $f \in  W_p^1 ([0,1])$ be such that $N_n(f)= \big( f(x_1), f(x_2), 
  \dots, f(x_n) \big) = (0,\dots,0)$
  and $\|f'\|_p = 1$. Let $f_i$ be the restriction of $f$ to the interval $[x_i,x_{i+1}]$. Then 
  $$ 
   \| f \|_q = \left( \sum_{i=0}^n \|f_i\|_q^q \right)^{1/q}
             \le \left( \sum_{i=0}^n (x_{i+1}-x_i)^{q-q/p+1} \|f'_i\|_p^q \right)^{1/q}.
  $$
  In the case $p \le q$, let again $\ell$ be the length of the largest of the intervals $[x_i,x_{i+1}]$.
  It follows that
  \begin{align*}
    \| f \|_q &\le  \ell^{1-1/p+1/q} \left( \sum_{i=0}^n  \|f'_i\|_p^q \right)^{1/q}
              \le \ell^{1-1/p+1/q} \left( \sum_{i=0}^n  \|f'_i\|_p^p \right)^{1/p} \\
              & =  \ell^{1-1/p+1/q} \|f'\|_p = \ell^{1-1/p+1/q}.
  \end{align*}
  This implies $r(N_n) \le  \ell^{1-1/p+1/q}$ as claimed.
  In the case $p > q$, let again $\ell_i=x_{i+1}-x_i$ and $s > 0$ be given by $\frac1s = \frac1q - \frac1p$. Using H\"older's inequality, we arrive at
  \begin{align*}
    \| f \|_q &\le \left( \sum_{i=0}^n \ell_i^{s+1}  \right)^{1/s} \left( \sum_{i=0}^n  \|f'_i\|_p^p \right)^{1/p}
               = \left( \sum_{i=0}^n \ell_i^{s+1}  \right)^{1/s} \|f'\|_p \\
              &=  \left( \sum_{i=0}^n \ell_i^{s+1}  \right)^{1/s}.
  \end{align*}  
  This implies the claimed upper bound also when $p>q$.
\end{proof}

\begin{proof}[Proof of Theorem \ref{thm:appd1s1}]
 In the case $p\le q$ the proof follows from Lemma \ref{lem:appd1s1}
 and the fact that, in expectation, 
 the largest gap among $n$ independent 
 and uniformly distributed points in $[0,1]$ is of order $\log(n)/n$.
 This is a simple consequence of Lemma~\ref{lem:coupons}.
 In the case $p>q$ the proof follows from
 $$
  \E\bigg[ \Big( \sum_{i=0}^n \ell_i^{s+1}  \Big)^{1/s}\bigg] \asymp \frac1n
 $$
 for any $s > 0$, where the $\ell_i$ are again the spacings of independent and uniformly distributed points in $[0,1]$. 
 In fact, the lower bound follows trivially since the expected radius of random information is larger than the radius of optimal information. 
 To prove the upper bound observe first that the function 
$s \mapsto \left( \sum_{i=0}^n \ell_i^{s+1}  \right)^{1/s}$ is 
 non-decreasing, since $\sum_{i=0}^n \ell_i=1$, and that Jensen's inequality implies
$$
 \E \bigg[\Big( \sum_{i=0}^n \ell_i^{s+1}  \Big)^{1/s}\bigg]
 \leq
 \bigg( \E  \Big[\sum_{i=0}^n \ell_i^{s+1}\Big]  \bigg)^{1/s} 
\qquad \text{whenever } s\ge1.
$$ 
Hence, it is enough to show that 
$$
 \E  \Big[\sum_{i=0}^n \ell_i^{s+1}\Big]
 \asymp
 n^{-s}
$$
for positive integers $s$.
It was proved in \cite{D1953} that
$$
 \E \Big[ \sum_{i=0}^n h(\ell_i)\Big]
 =
 n(n+1) \int_0^1 (1-r)^{n-1} h(r) \dint r
$$
for any integrable function $h : [0,1] \to \R$.
With $h(r)=r^{s+1}$, we obtain
$$
 \E  \Big[\sum_{i=0}^n \ell_i^{s+1}\Big]
 =
 n(n+1) B(s+2,n) =  \frac{(n+1)! (s+1)!}{(n+s+1)!}
 \asymp
 n^{-s},
$$
where $B(x,y)$ is the Beta function. This completes the proof.
\end{proof}

\begin{rem}
For the classes studied, i.e., when the smoothness is one, the radius of information $r(N_n)$ 
for the integration problem with 
$S(f) = \int_0^1 f(x) \, {\rm d} x$ is the same as for the $L_1$-approximation 
problem. Hence, for the integration problem for functions in $W_p^1 ([0,1])$, we have 
\[
  \E [r(N_n)] \asymp
 \begin{cases}
 r(N_n^\ast) \asymp n^{-1} & \text{if} \quad p>1 \\
 & \\
 r(N_{n/\log n}^\ast) \asymp \left( \frac{n}{\log n} 
  \right)^{-1} & \text{if} \quad p=1.
 \end{cases}
\]

Some cases of  Theorem \ref{thm:appd1s1} are known, 
sometimes in 
a slightly different 
language since authors had in mind discrepancy
(with optimal weights) instead of the 
integration problem. 
The star discrepany for $d=1$, i.e., 
$p=q=1$, was studied in~\cite{W15}. 
Here we have $r(N_n^*) = \frac{1}{2n}$. 
From Corollary~3.3 and Theorem~3.5 of \cite{W15} we know that
$$
\frac{c\log n}{n} \le \E [ r(N_n)] \le \frac{8 \log n}{n} 
$$
for some $c\in(0,\infty)$.
Therefore, random information is good, 
but not optimal. Also the case $p=2$ and $q=1$ is covered by \cite[Theorems 4.2 and 4.3]{W15}.
\end{rem}

\subsection{Approximation of multivariate Lipschitz functions}
\label{s2.2}

We study the problem of $L_q$-approximation of functions from the class
\begin{equation}
 \label{eq:defFdLip}
 F_d=\Big\{f:[0,1]^d\to \R \,:\,
 \abs{f(x)-f(y)}
 \leq \dist(x,y) \,\,\, \forall x,y\in[0,1]^d\Big\}.
\end{equation}
Here, $\dist$ denotes the maximum metric on the $d$-torus, i.e.,
$$
 \dist(x,y)=\min_{k\in\mathbb{Z}^d} 
 \norm{x+k-y}_\infty
 \quad\text{for}\quad
 x,y\in[0,1]^d.
$$
We consider standard information of the form
$$
 N_n(f)=(f(x_1),\hdots,f(x_n)),
 \qquad
 x_1,\hdots,x_n\in[0,1]^d.
$$
An optimal algorithm
based on $N_n$ works as follows:

\smallskip\noindent{\bf Algorithm.}
 For $N_n$ as above and $f\in F_d$,
 we define $A_n(f)=(f^++f^-)/2$, where
 \[
  f^+(x)=\min_{i \leq n} \big(f(x_i) + \dist(x,x_i)\big)
  \quad\text{and}\quad
  f^-(x)=\max_{i \leq n} \big(f(x_i) - \dist(x,x_i)\big).
 \]

Note that $f^+$ and $f^-$ are the maximal and the minimal 
 function in $F_d$ that interpolate $f$ at the nodes $x_1,\dots,x_n$.
We have the following relations that we prove for the sake of completeness, 
see e.g.~\cite{NW08,TWW88}.

\begin{lemma}\label{lem:radius0}
 $$
r(N_n) =
 \sup_{f\in F_d} \norm{f-A_n(f)}_q
  =\sup_{\substack{f\in F_d\\ N_n(f)=0}} \norm{f}_q.
 $$
\end{lemma}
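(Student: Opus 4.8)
The plan is to prove the chain of equalities from right to left, using the standard two-sided structure of the problem. First I would establish the second equality $\sup_{f\in F_d} \norm{f-A_n(f)}_q = \sup_{f\in F_d,\, N_n(f)=0} \norm{f}_q$. The key observation is that $A_n$ is an \emph{interpolatory} algorithm: since $f^+$ and $f^-$ both interpolate $f$ at the nodes and both lie in $F_d$, we have $A_n(f)=(f^++f^-)/2\in F_d$ and $A_n(f)$ interpolates $f$ as well. Hence $g:=f-A_n(f)$ satisfies $N_n(g)=0$, but $g$ need not lie in $F_d$; instead $g$ lies in $2F_d$ by symmetry and convexity ($f\in F_d$, $A_n(f)\in F_d$ gives $\tfrac12(f-A_n(f))\in F_d$, i.e. $g\in 2F_d$). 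This only gives $\le 2\sup_{N_n(f)=0,\,f\in F_d}\norm{f}_q$, so to get equality I would instead argue directly via the extremal functions: for fixed $f$, $\norm{f-A_n(f)}_q=\tfrac12\norm{(f-f^-)-(f^+-f)}_q\le \tfrac12\norm{f^+-f^-}_q$, and conversely one shows that the ``hardest'' instances are achieved by functions symmetric about $A_n(f)$, where $f-A_n(f)=\pm\tfrac12(f^+-f^-)$; note $\tfrac12(f^+-f^-)\in F_d$ and vanishes at the nodes.

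Next I would handle the first equality, $r(N_n)=\sup_{f\in F_d}\norm{f-A_n(f)}_q$. The inequality $r(N_n)\le \sup_f\norm{f-A_n(f)}_q$ is immediate from the definition of $r(N_n)$ as an infimum over all algorithms $\phi$, taking $\phi$ to be the map induced by $A_n$ (well-defined on the range of $N_n$). For the reverse inequality $r(N_n)\ge \sup_f\norm{f-A_n(f)}_q$, I would use the local symmetrization argument: given any algorithm $\phi$ and any $f\in F_d$, consider the two functions $f^+$ and $f^-$, which agree with $f$ on the nodes, so $\phi$ outputs the same element $\phi(N_n(f))=:y$ for all three. Then by the triangle inequality $2\norm{f^+-A_n(f)}_q=\norm{f^+-f^-}_q\le \norm{f^+-y}_q+\norm{f^--y}_q$, so at least one of $\norm{f^\pm - y}_q$ is $\ge \tfrac12\norm{f^+-f^-}_q=\norm{f^\pm - A_n(f)}_q\ge\norm{f-A_n(f)}_q$ after noting $\tfrac12\norm{f^+-f^-}_q\ge\norm{f-A_n(f)}_q$. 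Taking the supremum over $f$ and the infimum over $\phi$ yields $r(N_n)\ge\sup_f\norm{f-A_n(f)}_q$.

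The main obstacle is the bookkeeping around the inequality $\norm{f-A_n(f)}_q\le\tfrac12\norm{f^+-f^-}_q$ and making sure the supremum is genuinely attained (or approached) by the symmetric instances, so that the first inequality is an equality and not merely a factor-$2$ estimate. Concretely one must verify: (i) $f^-\le f\le f^+$ pointwise, so that $0\le f^+-f\le f^+-f^-$ and $0\le f-f^-\le f^+-f^-$, whence $|f-A_n(f)|=\tfrac12|(f-f^-)-(f^+-f)|\le\tfrac12(f^+-f^-)$ pointwise; and (ii) the bound is sharp because one may choose $f=f^+$ (or $f=f^-$), which lies in $F_d$, has the same information, and satisfies $|f^+-A_n(f^+)|=\tfrac12(f^+-f^-)$ pointwise — here one uses that $f^+$ and $f^-$ depend only on the node data, so $A_n(f^+)=A_n(f)$. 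Combining (i) and (ii) gives $\sup_{f\in F_d}\norm{f-A_n(f)}_q=\sup_{f\in F_d}\tfrac12\norm{f^+-f^-}_q$, and since $\tfrac12(f^+-f^-)$ ranges exactly over the nonnegative functions in $F_d$ vanishing at the nodes as $f$ ranges over $F_d$ (with the extremes $f^+$ at those nodes), this equals $\sup_{g\in F_d,\,N_n(g)=0}\norm g_q$, closing the loop.
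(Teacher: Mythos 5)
Your proof is correct and rests on the same key facts as the paper's: $f^+,f^-\in F_d$ interpolate $f$, the pointwise bound $\abs{f-A_n(f)}\le\tfrac12(f^+-f^-)$, and the observation that $\tfrac12(f^+-f^-)$ lies in $F_d$ and vanishes on the nodes. The paper organizes this more economically as the single cycle $\sup_f\norm{f-A_n(f)}_q\ge r(N_n)\ge\sup_{N_n(f)=0}\norm{f}_q\ge\sup_f\norm{f-A_n(f)}_q$, which makes your separate symmetrization argument for the first equality unnecessary. One small inaccuracy: $\tfrac12(f^+-f^-)$ does \emph{not} range over all nonnegative null-functions of $F_d$ (for any $f$ with $N_n(f)=0$ it equals $\dist(\cdot,P_n)$), but your conclusion survives because every $g\in F_d$ with $N_n(g)=0$ satisfies $\abs{g}\le\dist(\cdot,P_n)$ pointwise.
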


\begin{proof}
 Clearly $A_n$ is of the form $\phi\circ N_n$
 for some mapping $\phi:\R^n\to L_q$.
 The definition of the radius yields 
 $$
  \sup_{f\in F_d} \norm{f-A_n(f)}_q
  \geq r(N_n) \geq \sup_{\substack{f\in F_d\\ N_n(f)=0}} \norm{f}_q.
 $$
 On the other hand, any $f\in F_d$ satisfies the pointwise estimate
 \begin{equation}
 \label{eq:pointwise estimate}
  \abs{f - A_n(f)} \leq \frac{f^+-f^-}{2}.
 \end{equation}
 This implies 
 $$
 \sup_{\substack{f\in F_d\\ N_n(f)=0}} \norm{f}_q
  \geq \sup_{f\in F_d} \norm{f-A_n(f)}_q
 $$ 
 since the right-hand side of \eqref{eq:pointwise estimate}
 is an element of $F_d$ that vanishes on $\{x_1,\hdots,x_n\}$. 
 Altogether, we obtain the claimed identities.
\end{proof}

It is also well known that optimal information
is given by function values on a regular grid
if $n=m^d$ for some $m\in\N$.
The radius of optimal information $N_n^*$ satisfies
$$
 r(N_n^*) \asymp n^{-1/d}
$$
for all $1\leq q\leq\infty$.
This follows from the upper bound on the complexity
of uniform approximation as studied in \cite{Su78} 
and the lower bound on the complexity of numerical
integration as studied in \cite{Su79}.
Using the technique of proof of the latter, 
we even obtain a precise formula
for the radius of optimal information.

\begin{prop}
\label{prop:lipoptimal}
 Let $n=m^d$ for some $m\in\N$.
 Then
 $$
  r(N_n^*)
  = \left\{\begin{array}{ll}
  		\displaystyle
        \frac{1}{2} \Big(\frac{d}{d+q}\Big)^{1/q}\,n^{-1/d}
        & \text{if} \quad 1\leq q<\infty,\vspace*{2mm}\\
        \displaystyle
        \frac{1}{2}\,n^{-1/d} &\text{if} \quad q=\infty.
        \end{array}\right.
 $$
\end{prop}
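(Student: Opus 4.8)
\emph{Proof strategy.} The plan is to convert the proposition into a purely geometric extremal problem for the function $x\mapsto\dist(x,\set{x_1,\dots,x_n})$, solve the lower bound by a volume (covering) argument in the style of~\cite{Su79}, and match it by an explicit computation for the regular grid. First I would combine Lemma~\ref{lem:radius0} with the elementary fact that any $f\in F_d$ with $f(x_i)=0$ satisfies $\abs{f(x)}=\abs{f(x)-f(x_i)}\le\dist(x,x_i)$ for every $i$, hence $\abs{f(x)}\le\dist(x,P)$ with $P=\set{x_1,\dots,x_n}$; since $\dist(\cdot,P)$ is itself $1$-Lipschitz with respect to $\dist$ and vanishes on $P$, monotonicity of $\norm{\cdot}_q$ and Lemma~\ref{lem:radius0} give the exact identity
\[
 r(N_n)=\norm{\dist(\cdot,P)}_q .
\]
Thus it remains to minimize $\norm{\dist(\cdot,P)}_q$ over all $n$-point sets $P\subset[0,1]^d$, where $n=m^d$.

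For the lower bound I use that a $\dist$-ball of radius $t$ has volume at most $(2t)^d$, so the sublevel set $\set{x:\dist(x,P)\le t}$, being a union of $n$ such balls, has Lebesgue measure at most $n(2t)^d=(2mt)^d$, and therefore $\lambda\big(\set{x:\dist(x,P)>t}\big)\ge 1-(2mt)^d$ for $0\le t\le 1/(2m)$. In the case $1\le q<\infty$ the layer-cake formula gives
\begin{align*}
 \norm{\dist(\cdot,P)}_q^q
 &=\int_0^\infty q\,t^{q-1}\,\lambda\big(\set{x:\dist(x,P)>t}\big)\,\d t\\
 &\ge\ \int_0^{1/(2m)} q\,t^{q-1}\big(1-(2mt)^d\big)\,\d t ,
\end{align*}
and evaluating the last integral yields $(2m)^{-q}-\tfrac{q}{q+d}(2m)^{-q}=\tfrac{d}{d+q}(2m)^{-q}$, i.e.\ $r(N_n)\ge\tfrac12\big(\tfrac{d}{d+q}\big)^{1/q}n^{-1/d}$. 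In the case $q=\infty$ the same volume count shows that $n$ balls of radius $t<1/(2m)$ cannot cover $[0,1]^d$, so $\norm{\dist(\cdot,P)}_\infty\ge 1/(2m)=\tfrac12 n^{-1/d}$.

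To match this I would evaluate $\norm{\dist(\cdot,G)}_q$ for $G$ the regular grid with $n=m^d$ nodes. Splitting $[0,1]^d$ into the $m^d$ congruent subcubes of side $1/m$, on each subcube the nearest grid node to a point is one of its $2^d$ corners, so by periodicity $\dist(\cdot,G)$ is there a translate of $x\mapsto\max_{j\le d}\min(x_j,1/m-x_j)$ on $[0,1/m]^d$. Rescaling $x_j=y_j/m$ gives
\[
 \int_{[0,1]^d}\dist(x,G)^q\,\d x=m^{-q}\int_{[0,1]^d}\Big(\max_{j\le d}\min(y_j,1-y_j)\Big)^q\,\d y ,
\]
and substituting $u_j=\min(y_j,1-y_j)$, uniform on $[0,1/2]$, identifies the right-hand integral with $2^{-q}$ times the $q$-th moment of $\max_{j\le d}v_j$ for i.i.d.\ uniform $v_j$ on $[0,1]$, which equals $\tfrac{d}{d+q}$ (the density of $\max_j v_j$ being $d\,t^{d-1}$). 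Hence $\int_{[0,1]^d}\dist(x,G)^q\,\d x=(2m)^{-q}\tfrac{d}{d+q}$, while for $q=\infty$ the maximum of $\dist(\cdot,G)$ is attained at the subcube centres and equals $1/(2m)$. Since every $f\in F_d$ vanishing on $G$ satisfies $\abs{f}\le\dist(\cdot,G)$ pointwise and $\dist(\cdot,G)\in F_d$, we get $r(N_n)=\norm{\dist(\cdot,G)}_q=\tfrac12\big(\tfrac{d}{d+q}\big)^{1/q}n^{-1/d}$ for the grid; together with the universal lower bound of the previous paragraph this shows that $r(N_n^*)$ equals the asserted value.

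I expect the only delicate points to be bookkeeping with the torus metric — verifying that on each subcube $\dist(\cdot,G)$ really is the stated function of the local coordinates, and that $\dist$-balls there genuinely have volume $(2t)^d$ — together with the arithmetic that makes the constants from the two sides coincide. The structural steps (reduce to $\norm{\dist(\cdot,P)}_q$, bound it below by a covering/volume argument, and compute the grid value explicitly) are routine once the first identity is established.
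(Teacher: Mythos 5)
Your proposal is correct and follows essentially the same route as the paper's proof: reduce to the identity $r(N_n)=\norm{\dist(\cdot,P)}_q$, obtain the lower bound from the volume estimate $\lambda^d(\dist(\cdot,P)\le t)\le n(2t)^d$ via the layer-cake formula, and verify that the regular grid attains it. The only (immaterial) difference is that you evaluate the grid integral directly by a change of variables and an order-statistics computation, whereas the paper observes that the layer-cake inequality becomes an equality for the grid since the balls are pairwise disjoint.
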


\begin{proof}
 Let $N_n$ be as above and let $P_n=\{x_1,\hdots,x_n\}$.
 Note that the function $\dist(\cdot,P_n)$
 is contained in $F_d$ and vanishes on $P_n$.
 On the other hand, every other function $f\in F_d$
 that vanishes on $P_n$ must satisfy
 $$
  \abs{f(x)} \leq \dist(x,P_n)
  \qquad\text{for all } x\in [0,1]^d.
 $$
 This yields
 $$
  r(N_n)
  = \sup_{\substack{f\in F_d\\ N_n(f)=0}}\,\norm{f}_q
  = \norm{\dist(\cdot,P_n)}_q.
 $$
 Let us first consider the case $q=\infty$.
 Since the volume of the union of the balls 
 $B_r^\infty(x)$ over $x\in P_n$ is smaller than 1
 for all $r<1/(2m)$, there must be some $x\in [0,1]^d$
 with $\dist(x,P_n)\geq r$. Therefore,
 $$
  \norm{\dist(\cdot,P_n)}_\infty \geq \frac{1}{2m},
 $$
where equality holds when $P_n=\{i/m \,:\, 0\leq i<m\}^d$.
 Let us turn to the case $q<\infty$.
Let $\lambda^d$ denote Lebesgue measure on $[0,1]^d$. Then
 $$
  r(N_n)^q
  = \int_{[0,1]^d} \dist(x,P_n)^q~\dint x
  = \int_0^{\infty} \lambda^d\brackets{\dist(x,P_n)^q\geq t}\dint t.
 $$
 Note that
 $$
  \lambda^d\brackets{\dist(x,P_n)^q\geq t}
  = 1-\lambda^d\brackets{\dist(x,P_n)^q< t}
  \geq 1-n\cdot 2^d t^{d/q},
 $$
 where equality holds if the sets
 $B_{t^{1/q}}^\infty(y)$ for $y\in P_n$
 are pairwise disjoint.
 Hence,
 \begin{align*}
  r(N_n)^q
  & \geq \int_0^{(1/2m)^q} \lambda^d\brackets{\dist(x,P_n)^q\geq t}\dint t \cr
  & \geq \frac{1}{(2m)^q} - 2^d n \int_0^{(1/2m)^q} t^{d/q}\dint t
  = \frac{1}{(2m)^q} \frac{d}{d+q}
 \end{align*}
 with equality for $P_n=\{i/m \,:\, 0\leq i<m\}^d$.
 This proves the statement.
\end{proof}

In the following, 
we study the expected radius of random information
$$
 N_n(f)=\big(f(x_1),\hdots,f(x_n)\big),
$$
where the points $x_1,\hdots,x_n$ are 
independent and uniformly distributed
in $[0,1]^d$.
If $q$ is finite, the $q^{\rm th}$ moment of the radius
at zero can be computed precisely.

\begin{prop}
\label{prop:lipnoninfty}
Consider $L_q$-approximation of functions from $F_d$ defined in 
\eqref{eq:defFdLip} 
with $1\le q < \infty$, 
using function values at uniformly distributed points in $[0,1]^d$.
 Then
 $$
  \E \left[r(N_n)^q\right]
  = \frac{1}{2^q} \frac{n!}{(q/d+1)\cdots(q/d+n)}.
 $$
 In particular, we have
 $$
  \lim_{n\to\infty} n^{1/d} \sqrt[\leftroot{1}\uproot{2}q]{\E \left[r(N_n)^q\right]}
  \,=\,
  \frac{1}{2} \sqrt[\leftroot{1}\uproot{2}q]{\Gamma(q/d+1)}.
 $$
\end{prop}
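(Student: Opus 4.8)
The plan is to work pathwise from the formula established inside the proof of Proposition~\ref{prop:lipoptimal}: for \emph{any} point set $P_n=\{x_1,\dots,x_n\}\subset[0,1]^d$ the information $N_n(f)=(f(x_1),\dots,f(x_n))$ satisfies $r(N_n)=\norm{\dist(\cdot,P_n)}_q$, hence
$$
 r(N_n)^q=\int_{[0,1]^d}\dist(x,P_n)^q\,\dint x .
$$
Applying this to the random point set and taking expectations, Fubini's theorem gives $\E\left[r(N_n)^q\right]=\int_{[0,1]^d}\E\left[\dist(x,P_n)^q\right]\dint x$. The key simplification is that, because $\dist$ is the \emph{torus} metric and the $x_i$ are uniform, the law of $\dist(x,P_n)$ is the same for every $x$: the translation $y\mapsto y-x$ on the torus is measure preserving and distance preserving, and maps the i.i.d.\ uniform family $(x_i)$ to another i.i.d.\ uniform family. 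Therefore $\E\left[r(N_n)^q\right]=\E\left[\dist(0,P_n)^q\right]=\E\big[\min_{i\le n}\dist(0,x_i)^q\big]$, and we are left to compute the $q$-th moment of a minimum of i.i.d.\ variables.

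Next I would record the elementary fact that the ball $B_r^\infty(0)$ in the torus maximum metric has Lebesgue measure $(2r)^d$ for all $0\le r\le 1/2$, while $\dist(0,P_n)\le 1/2$ always. By independence, $\Pr[\dist(0,P_n)>r]=(1-(2r)^d)^n$ for $r\in[0,1/2]$. Using the layer-cake identity with $Y=\dist(0,P_n)^q$, the moment becomes
$$
 \E\left[r(N_n)^q\right]=\int_0^{(1/2)^q}\Pr\big[\dist(0,P_n)>t^{1/q}\big]\,\dint t=\int_0^{(1/2)^q}\big(1-2^d t^{d/q}\big)^n\,\dint t .
$$
The substitution $u=2^d t^{d/q}$ (so $t=2^{-q}u^{q/d}$, $\dint t=2^{-q}\tfrac qd u^{q/d-1}\dint u$, $u\in[0,1]$) turns this into a Beta integral,
$$
 \E\left[r(N_n)^q\right]=\frac{q}{d\,2^{q}}\int_0^1 (1-u)^n u^{q/d-1}\,\dint u=\frac{q}{d\,2^{q}}\,B\!\Big(\tfrac qd,\,n+1\Big),
$$
and the identities $B(q/d,n+1)=\Gamma(q/d)\Gamma(n+1)/\Gamma(q/d+n+1)$, $\tfrac qd\Gamma(q/d)=\Gamma(q/d+1)$, and $\Gamma(q/d+n+1)=(q/d+n)\cdots(q/d+1)\,\Gamma(q/d+1)$ collapse the expression to $2^{-q}\,n!/\big((q/d+1)\cdots(q/d+n)\big)$, which is the asserted formula.

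For the asymptotic statement I would start from the equivalent closed form $\E\left[r(N_n)^q\right]=2^{-q}\Gamma(q/d+1)\,\Gamma(n+1)/\Gamma(n+1+q/d)$ and apply the standard ratio asymptotics $\Gamma(n+1)/\Gamma(n+1+q/d)\sim n^{-q/d}$ as $n\to\infty$ (a direct consequence of Stirling's formula). This yields $n^{q/d}\E\left[r(N_n)^q\right]\to 2^{-q}\Gamma(q/d+1)$, and taking $q$-th roots gives the limit. I do not expect a genuine obstacle here; the one point that must be stated with care is the reduction to $\E[\dist(0,P_n)^q]$, which uses the torus structure — with the Euclidean metric on the cube the law of $\dist(x,P_n)$ would truly depend on $x$, and one would only obtain two-sided estimates rather than an exact identity.
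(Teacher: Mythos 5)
Your proposal is correct and follows essentially the same route as the paper: reduce $\E[r(N_n)^q]$ via Tonelli to the constant integrand $\E[\dist(x,P_n)^q]$, compute the tail $\PP[\dist(x,P_n)>t]=(1-(2t)^d)^n$ using the torus structure, and evaluate the resulting Beta integral (your substitution $u=2^dt^{d/q}$ is just the reflection $s=1-u$ of the paper's). Your explicit translation-invariance justification for the constancy of the integrand and the Stirling-type ratio asymptotics for the limit are slightly more detailed than the paper's, but not a different argument.
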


\begin{proof}
 Let $P_n=\{x_1,\hdots,x_n\}$.
 Recall that
 $$
  r(N_n)^q=
  \int\nolimits_{[0,1]^d} \dist(x,P_n)^q~\dint x.
 $$
 Using Tonelli's theorem, we obtain
 $$
  \E \left[r(N_n)^q\right]
  = \int\nolimits_{[0,1]^d} \E\big[ \dist(x,P_n)^q\big]~\dint x.
 $$
 We will show that the integrand of the latter
 integral is constant.
 Let us fix $x\in[0,1]^d$ and
 note that $\dist(x,P_n)\in[0,1/2]$.
 For any $t\in[0,1/2]$, we have 
 $$
  \dist(x,P_n) \geq t
  \quad \Longleftrightarrow \quad
  \forall i\in\set{1,\hdots,n}:
  x_i \not\in B_t^\infty(x).
 $$
 Thus
 $$
  \PP[\dist(x,P_n) \geq t]
  = \brackets{1-(2t)^d}^n.
 $$
 The substitution $s=1-(2t^{1/q})^d$ and
 integration by parts yield
 \begin{multline*}
  \E\big[\dist(x,P_n)^q\big]
  =\int_0^{2^{-q}} \PP[\dist(x,P_n)^q \geq t]\,\dint t
  =\int_0^{2^{-q}} \brackets{1-(2t^{1/q})^d}^n\,\dint t\\
  =\frac{q/d}{2^q}\int_0^1 s^n (1-s)^{q/d-1}\,\dint s
  =\frac{1}{2^q} \frac{n!}{(q/d+1)\cdots(q/d+n)}
 ,\end{multline*}
 which implies the statement of the theorem.
\end{proof}



We now turn to the case of uniform approximation, i.e., $q=\infty$.
In this case,
the expected radius of information 
is closely related to 
the so called coupon collector's problem.
This is the question for
the random number $\tau_\ell$ of coupons that a coupon collector has
to collect to obtain a complete set of $\ell$ distinct coupons.
The following facts on the distribution of $\tau_\ell$ 
are well known, see \cite{LPW09}.
Here $H_\ell=\sum_{k=1}^\ell 1/k$ is the $\ell^{th}$
harmonic number. Note that $H_\ell\sim \log\ell$
as $\ell\to\infty$. 

\begin{lemma}
\label{lem:coupons}
 Let $(Y_i)_{i=1}^\infty$ be a sequence of random
 variables that are uniformly distributed in
 the set $\set{1,\hdots,\ell}$
 and let
 $$
  \tau_\ell=\min\set{n\in\N \,:\, \set{Y_1,\hdots,Y_n}=\set{1,\hdots,\ell}}.
 $$
 Then
 $$
  \E[\tau_\ell]= \ell H_\ell
  \quad\text{and}\quad
  {\rm Var}[\tau_\ell]\leq \ell^2 \sum_{k=1}^\ell 1/k^2
 $$
 and for any $c\in(0,\infty)$,
 $$
  \PP\big[\tau_\ell > \lceil c\,\ell\log \ell\rceil\big] \leq 
  \ell^{-c+1}.
 $$
\end{lemma}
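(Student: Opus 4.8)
The plan is to treat the three assertions separately, using the standard decomposition of $\tau_\ell$ into independent geometric waiting times. Write $\tau_\ell = T_1 + T_2 + \cdots + T_\ell$, where $T_k$ is the number of additional draws needed to see the $k^{\rm th}$ distinct coupon once $k-1$ distinct coupons have already been collected. Once $k-1$ distinct values have appeared, each new draw is "new" with probability $(\ell-k+1)/\ell$, so $T_k$ is geometric with that success parameter, and the $T_k$ are independent. This reduction is the workhorse for all three parts; I would state it first and justify it briefly by the memorylessness of uniform sampling (conditioning on the current set of collected coupons, the next fresh coupon depends only on how many remain).

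For the expectation, I would use $\E[T_k] = \ell/(\ell-k+1)$ and sum: $\E[\tau_\ell] = \sum_{k=1}^\ell \ell/(\ell-k+1) = \ell\sum_{j=1}^\ell 1/j = \ell H_\ell$. For the variance, independence gives ${\rm Var}[\tau_\ell] = \sum_{k=1}^\ell {\rm Var}[T_k]$, and since a geometric random variable with success probability $p$ has variance $(1-p)/p^2 \le 1/p^2$, we get ${\rm Var}[\tau_\ell] \le \sum_{k=1}^\ell (\ell/(\ell-k+1))^2 = \ell^2 \sum_{j=1}^\ell 1/j^2$, as claimed. Both of these are routine once the decomposition is in place.

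For the tail bound I would argue directly rather than through Chebyshev, since the stated estimate $\PP[\tau_\ell > \lceil c\,\ell\log\ell\rceil] \le \ell^{-c+1}$ is sharper than what the variance bound yields. The clean approach is a union bound over coupons: $\tau_\ell > m$ exactly when some coupon is still missing after $m$ draws, so $\PP[\tau_\ell > m] \le \sum_{i=1}^\ell \PP[\text{coupon } i \text{ missing after } m \text{ draws}] = \ell(1-1/\ell)^m \le \ell\, e^{-m/\ell}$. Taking $m = \lceil c\,\ell\log\ell\rceil \ge c\,\ell\log\ell$ gives $\PP[\tau_\ell > m] \le \ell \cdot e^{-c\log\ell} = \ell^{1-c}$, which is the assertion.

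The only point requiring a little care — and the place I would be most careful in the write-up — is the independence of the $T_k$, since a reader might worry that the waiting times interact. The justification is that after collecting a given set $S$ of $k-1$ coupons, the conditional law of the draws until the next new coupon, and of everything thereafter, depends on $S$ only through $|S|$ by symmetry of the uniform distribution; hence $T_k$ depends only on $k$ and is independent of $T_1,\dots,T_{k-1}$. With that observed, no genuine obstacle remains; all three displays follow from elementary facts about geometric random variables and a one-line union bound.
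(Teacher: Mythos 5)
Your proof is correct. The paper itself gives no proof of Lemma~\ref{lem:coupons} --- it simply calls these facts well known and cites \cite{LPW09} --- and your argument (the decomposition of $\tau_\ell$ into independent geometric waiting times for the mean and variance, plus a union bound over the missing coupons combined with $1-x\le e^{-x}$ for the tail estimate) is precisely the standard derivation found there, so nothing further is needed.
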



This leads to the following estimates
of the expected radius for $q=\infty$.

\begin{prop} \label{thm:lipinfty}
Consider $L_\infty$-approximation of functions from $F_d$ defined in \eqref{eq:defFdLip}.
 For $n\in\N$, let
\begin{align*}
   m_1 &=\min\set{m\in\N \,:\, m^d (H_{m^d}-2) \geq n},\\
   m_2 &=\max\set{m\in\N \,:\, 2 m^d \log(m^d) \leq n}.
\end{align*}
 Then
 $$
  \frac{1}{4m_1}
  \leq \E[ r(N_n)] \leq
  \frac{2}{m_2}.
 $$
\end{prop}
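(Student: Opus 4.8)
The plan is to squeeze $r(N_n)$ between two quantities governed by the coupon collector's problem and then to invoke Lemma~\ref{lem:coupons}, using the variance bound together with Chebyshev's inequality for the lower bound and the tail bound for the upper bound. Throughout I would use the identity $r(N_n)=\norm{\dist(\cdot,P_n)}_\infty$ obtained in the proof of Proposition~\ref{prop:lipoptimal}, together with the deterministic estimate $r(N_n)\le\tfrac12$, which holds because the maximum metric on the torus has diameter $\tfrac12$. The common device is this: for $m\in\N$ partition $[0,1]^d$ into $\ell=m^d$ congruent half-open subcubes of side $1/m$; since $x_1,\dots,x_n$ are independent and uniform, the subcubes into which they fall are distributed exactly like the first $n$ draws in the coupon collector's problem with $\ell$ coupons, so $\PP[\text{every subcube is hit}]=\PP[\tau_\ell\le n]$.

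For the upper bound I would take $m=m_2$ and $\ell=m_2^d$. On the event that every subcube is hit, each $x\in[0,1]^d$ lies in a subcube containing some $x_i$, whence $\dist(x,P_n)\le\norm{x-x_i}_\infty\le 1/m_2$ and so $r(N_n)\le 1/m_2$; on the complementary event I would use $r(N_n)\le\tfrac12$. By the definition of $m_2$ we have $n\ge 2\ell\log\ell$, hence $n\ge\lceil 2\ell\log\ell\rceil$ since $n$ is an integer, and Lemma~\ref{lem:coupons} with $c=2$ gives $\PP[\tau_\ell>n]\le\ell^{-1}\le m_2^{-1}$. Splitting $\E[r(N_n)]$ over the two events then yields $\E[r(N_n)]\le m_2^{-1}+\tfrac12 m_2^{-1}\le 2/m_2$.

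For the lower bound I would take $m=m_1$ and $\ell=m_1^d$. If some subcube $Q$ of side $1/m_1$ contains none of the $x_i$, then its centre $c$ satisfies $\dist(c,P_n)\ge 1/(2m_1)$, because the $\dist$-ball $B_{1/(2m_1)}^\infty(c)$ is precisely the open cube of side $1/m_1$ about $c$, which is contained in $Q$; hence $r(N_n)\ge\tfrac{1}{2m_1}$ on the event $\{\tau_\ell>n\}$. The definition of $m_1$ gives $n\le\ell H_\ell-2\ell=\E[\tau_\ell]-2\ell$, so using ${\rm Var}[\tau_\ell]\le\ell^2\sum_{k\ge1}k^{-2}<2\ell^2$ Chebyshev's inequality gives
$$
\PP[\tau_\ell\le n]\le\PP\big[\,|\tau_\ell-\E[\tau_\ell]|\ge 2\ell\,\big]\le\frac{{\rm Var}[\tau_\ell]}{4\ell^2}\le\frac12,
$$
and therefore $\E[r(N_n)]\ge\tfrac{1}{2m_1}\,\PP[\tau_\ell>n]\ge\tfrac{1}{4m_1}$.

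All the individual steps are routine; the parts that need genuine care are (i) the bookkeeping that ties the constants $-2$ and $2$ in the definitions of $m_1$ and $m_2$ to a Chebyshev deviation of exactly $2\ell$ (with $\sum_{k\ge1}k^{-2}<2$) and to the choice $c=2$ in the tail bound of Lemma~\ref{lem:coupons}, and (ii) the torus-geometry observation that an empty subcube of side $1/m$ really does certify a point at $\dist$-distance at least $1/(2m)$ from $P_n$, which rests on the fact that $\dist$-balls of radius at most $\tfrac12$ are honest cubes. I do not expect any obstacle beyond these.
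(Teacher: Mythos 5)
Your proposal is correct and follows essentially the same route as the paper: partition $[0,1]^d$ into $\ell=m^d$ congruent subcubes, identify the hitting of all subcubes with the coupon collector variable $\tau_\ell$, use the tail bound of Lemma~\ref{lem:coupons} with $c=2$ and $m=m_2$ for the upper bound, and Chebyshev's inequality with the variance bound and $m=m_1$ for the lower bound. The only (immaterial) differences are cosmetic, e.g.\ you bound $r(N_n)$ by $\tfrac12$ on the bad event where the paper uses the cruder bound $1$ together with $\ell^{-1}\le m^{-1}$.
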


\begin{proof}
Let $m\in\N$ and decompose $[0,1]^d$ into $\ell=m^d$ boxes 
 $$
  \prod_{i=1}^d\left[\frac{k_i-1}{m},\frac{k_i}{m}\right),
  \quad
  k_1,\dots,k_d\in\set{1,2,\hdots,m}
 $$
 of equal volume.
 Again, let $P_n=\{x_1,\hdots,x_n\}$.
 Recall that the radius of the information 
$N_n(f) = \big( f(x_1),\dots,f(x_n) \big)$ is given by
 $$
  r(N_n)
  = \max_{x\in [0,1]^d} \dist(x,P_n).
 $$ 
 Therefore, $r(N_n)$ is bounded above by $1/m$ if every box contains
 a point of $P_n$,
 and bounded below by $1/(2m)$ if one of the boxes does
 not contain a point of $P_n$.
 Let $A$ be the event that every box contains a point.
 Note that the number of random points $x_i$
 that it takes to hit all the boxes follows the
 distribution of the coupon collector's variable $\tau_\ell$ 
 as defined in Lemma~\ref{lem:coupons}.
 For the upper bound, we choose $m=m_2$.
 Lemma~\ref{lem:coupons} yields
 $$
  \PP\big[A^\mathsf{c}\big] = \PP[\tau_\ell > n] \leq 1/\ell
 $$
 and hence
 $$
  \E[ r(N_n)]\\
 \leq \PP[A] \cdot \frac{1}{m} + \PP\big[A^\mathsf{c}\big] \cdot 1
 \leq \frac{2}{m}.
 $$
 For the lower bound, we choose $m=m_1$.
 Chebyshev's inequality yields
 $$
  \PP[A] = \PP[\tau_\ell\leq n]
  \leq \PP[\tau_\ell \leq \ell H_\ell-2\ell ]
  \leq \frac{{\rm Var}[\tau_\ell]}{4\ell^2}
  \leq \frac{1}{2}.
 $$
 We obtain
 $$
  \E[ r(N_n)]
  \geq \PP\big[A^\mathsf{c}\big]\, \frac{1}{2m} 
  \geq \frac{1}{4m},
 $$
 as it was to be proven.
\end{proof} 

We arrive at the following result 
on the power of random information
proved in~\cite{K19}.
Note that the case $q=\infty$ is already known from~\cite{BDKKW17},
where the authors study the uniform approximation
of functions on $[0,1]^d$ with bounded $r^{\rm th}$ derivative.


\begin{thm}[Krieg]
Consider $L_q$-approximation of functions from $F_d$ 
defined in \eqref{eq:defFdLip} with $1\le q \le \infty$, 
using function values at uniformly distributed points in $[0,1]^d$.
Then 
 \[
 \E[r(N_n)] \asymp
 \begin{cases}
 r\brackets{N_n^*} \asymp n^{-1/d} & \text{if} \quad q<\infty \\
 & \\
 r\brackets{N_{n/\log n}^*} \asymp \brackets{\frac{n}{\log n}}^{-1/d} &\text{if} \quad q=\infty.
 \end{cases}
 \]

\end{thm}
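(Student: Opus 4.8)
The plan is to combine the deterministic formula for $r(N_n^*)$ from Proposition~\ref{prop:lipoptimal} with the two probabilistic estimates just established, namely Proposition~\ref{prop:lipnoninfty} for finite $q$ and Proposition~\ref{thm:lipinfty} for $q=\infty$. First I would treat the case $q<\infty$. From Proposition~\ref{prop:lipnoninfty} we have the exact value $\E[r(N_n)^q]=2^{-q}\,n!/\big((q/d+1)\cdots(q/d+n)\big)$, and the stated limit shows $\big(\E[r(N_n)^q]\big)^{1/q}\asymp n^{-1/d}$; on the other hand Proposition~\ref{prop:lipoptimal} gives $r(N_n^*)\asymp n^{-1/d}$ along the subsequence $n=m^d$, and since $r(N_n^*)$ is non-increasing in $n$ this extends to all $n$ up to constants. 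To pass from control of the $q^{\rm th}$ moment to control of $\E[r(N_n)]$ I would use Jensen's inequality for the upper bound, $\E[r(N_n)]\le \big(\E[r(N_n)^q]\big)^{1/q}\preccurlyeq n^{-1/d}$, and the trivial lower bound $\E[r(N_n)]\ge r(N_n^*)\succcurlyeq n^{-1/d}$ coming from $r(N_n^*)=\inf_{N_n} r(N_n)$. This already yields $\E[r(N_n)]\asymp r(N_n^*)\asymp n^{-1/d}$ and closes the case $q<\infty$.

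For the case $q=\infty$ I would invoke Proposition~\ref{thm:lipinfty}, which sandwiches $\E[r(N_n)]$ between $1/(4m_1)$ and $2/m_2$, where $m_1=\min\{m:m^d(H_{m^d}-2)\ge n\}$ and $m_2=\max\{m:2m^d\log(m^d)\le n\}$. The remaining work is purely asymptotic: since $H_{m^d}\sim d\log m$ and $\log(m^d)=d\log m$, both defining conditions read, up to constants, as $m^d\log m\asymp n$, so $m_1\asymp m_2\asymp (n/\log n)^{1/d}$. Hence $\E[r(N_n)]\asymp (n/\log n)^{1/d\cdot(-1)}=(n/\log n)^{-1/d}$. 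To identify this with $r(N_{n/\log n}^*)$ I would again use Proposition~\ref{prop:lipoptimal}: $r(N_k^*)\asymp k^{-1/d}$ for all $k$, so with $k=n/\log n$ we get $r(N_{n/\log n}^*)\asymp (n/\log n)^{-1/d}$, matching the displayed estimate.

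The only slightly delicate points are bookkeeping ones rather than genuine obstacles. First, Proposition~\ref{prop:lipoptimal} is stated only for $n=m^d$, so I need the monotonicity remark ($r(N_n^*)$ non-increasing in $n$, which is immediate since more information cannot hurt) to interpolate to general $n$ and absorb the gap between consecutive $d$-th powers into the asymptotic constants. Second, in the case $q<\infty$ one should note that the constant in Jensen's bound and in the lower bound both depend on $q$ and $d$, but that is harmless since $\asymp$ allows constants depending on the fixed parameters of the problem. Third, for $q=\infty$ one must check that $m_1$ and $m_2$ are well-defined and finite for all large $n$ and that the expressions $H_{m^d}-2$ and $2\log(m^d)$ are eventually positive and increasing in $m$; this is routine. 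I do not anticipate any real difficulty, since all the analytic content is already packaged in the preceding propositions and the coupon-collector lemma — the theorem is essentially their corollary.
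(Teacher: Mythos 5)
Your proposal is correct and follows essentially the same route as the paper: the rates for $r(N_n^*)$ come from Proposition~\ref{prop:lipoptimal}, the case $q<\infty$ from Proposition~\ref{prop:lipnoninfty} via the moment bound $\E[r(N_n)]\le\bigl(\E[r(N_n)^q]\bigr)^{1/q}$ together with the trivial lower bound, and the case $q=\infty$ from Proposition~\ref{thm:lipinfty} after observing $m_1^d\asymp m_2^d\asymp n/\log n$. The extra bookkeeping you mention (monotonicity of $r(N_n^*)$ to pass from $n=m^d$ to general $n$, well-definedness of $m_1,m_2$) is handled implicitly in the paper and is indeed routine.
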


\begin{proof}
 The statements on the rate of the optimal radius of information follow from Proposition~\ref{prop:lipoptimal}.
 The upper bound on the expected radius in the first case
 follows from Proposition~\ref{prop:lipnoninfty}, since the expected radius
 is bounded above by its $q^{\rm th}$ moment.
 The lower bound is trivial.
 The upper and lower bound on the expected radius in the second case
 follow from Proposition~\ref{thm:lipinfty}, since both $m_1^d$ and $m_2^d$ are of order $n/\log n$.
\end{proof}

Thus, in the sense of order of convergence,
random information is as good as optimal information
for the problem of $L_q$-approximation on $F_d$ if $q<\infty$.
If $q=\infty$, we loose a logarithmic factor.

\begin{rem}[Modifications of $F_d$]
 The rates of convergence of the expected and
 the optimal radius do not change
 if we replace the maximum metric on the torus
 by some equivalent metric.
 The same holds true if we change the
 Lipschitz constant or if we switch 
 to the non-periodic setting.
\end{rem}

\subsection{Classical Sobolev spaces}
\label{s2.3} 

We study the general $L_q$-approximation 
problem for Sobolev spaces, 
$$
\APP  \colon  W_p^s ([0,1]^d) \to L_q ([0,1]^d)
$$
given by $\APP(f)=f$ with standard information.
A typical choice for the norm in this space is
\[
\|f\|_{W^s_p([0,1]^d)} \,=\,
\bigg(\sum_{\beta\in\N_0^d\colon \|\beta\|_1\le s } \|D^\beta f\|_p^p\bigg)^{1/p}.
\]
It is known that the radius of optimal information $N_n^*$ satisfies
\begin{equation}  \label{triebel}
r(N_n^\ast) \asymp n^{-s/d + (1/p-1/q)_+},
\end{equation}
where we assume that $s > d/p$ holds, to ensure that 
the functions are continuous.
This can be found in \cite{NT06} for general Lipschitz domains 
and was known earlier for 
special cases, such as the cube, see~\cite[Remark~24]{NT06}. 
Here we ask for the expected radius
of random information $N_n$ given by $n$ independent
and uniformly distributed 
points in $[0,1]^d$.
We conjecture that random information 
is as good as optimal information provided that $p>q$, 
whereas in the case $p\le q$ there is a loss of a logarithmic factor.
This conjecture is true in
the following special cases:
\begin{itemize}
 \item Section \ref{s2.1} covers univariate Sobolev
 spaces of smoothness $1$, i.e., $s=d=1$.
 \item Section \ref{s2.2} covers Lipschitz functions, 
 i.e., $s=1$ and $p=\infty$.
 \item The paper \cite{BDKKW17} covers uniform approximation
 on H\"older classes, i.e., $p=q=\infty$.
\end{itemize}
In the general case, we are only able to prove the 
second part of our conjecture.

\begin{thm}
 \label{thm:Sobolev_small_q}
Consider $L_q$-approximation of functions from $W^s_p([0,1]^d)$ 
with $1\le p\le q \le \infty$, 
using function values at uniformly distributed points in $[0,1]^d$.
Then
 $$
  \E[ r(N_n)]
  \asymp r\brackets{N_{n/\log n}^*} 
  \asymp \brackets{\frac{n}{\log n}}^{-s/d + 1/p - 1/q}.
 $$
\end{thm}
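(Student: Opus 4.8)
The plan is to prove the upper and lower bounds on $\E[r(N_n)]$ separately, in both cases reducing to the behaviour of the spacings/gaps between the random points, much as in Sections~\ref{s2.1} and~\ref{s2.2}. The identity $r\brackets{N_{n/\log n}^*}\asymp (n/\log n)^{-s/d+1/p-1/q}$ is immediate from~\eqref{triebel}, so the content is the two-sided estimate $\E[r(N_n)]\asymp(n/\log n)^{-s/d+1/p-1/q}$ for $p\le q$.

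\textbf{Lower bound.} Here I would use a localized bump function supported in a small empty region. Since the points $x_1,\dots,x_n$ are i.i.d.\ uniform, with high probability there is an axis-parallel cube $Q\subset[0,1]^d$ of side length $\ell\asymp (\log n/n)^{1/d}$ containing none of the points; this follows from Lemma~\ref{lem:coupons} applied to a grid of $\asymp n/\log n$ boxes, exactly as in Proposition~\ref{thm:lipinfty}. Fix a nonzero $\psi\in W^s_p(\R^d)$ supported in $[0,1]^d$ and set $f(y)=\ell^{s}\,\psi((y-a)/\ell)$ where $a$ is the corner of $Q$; then $f$ vanishes at all the sample points (so $N_n(f)=0$), and scaling gives $\|f\|_{W^s_p}\asymp \ell^{s-d/p+\text{(lower order)}}$ — more precisely the top-order term $\|D^\beta f\|_p$ with $\|\beta\|_1=s$ scales like $\ell^{0}\cdot\ell^{d/p}=\ell^{d/p}$ while lower-order derivatives scale with higher powers of $\ell$, so $\|f\|_{W^s_p}\asymp\ell^{d/p}$ for $\ell\le1$; meanwhile $\|f\|_q\asymp\ell^{s}\cdot\ell^{d/q}$. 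Using $r(N_n)\succcurlyeq \|f\|_q/\|f\|_{W^s_p}$ (valid since $r(N_n)\asymp\sup\{\|f\|_q:N_n(f)=0,\ \|f\|_{W^s_p}\le1\}$ up to a factor $2$, see \cite[Section~4.2]{NW08}) gives $r(N_n)\succcurlyeq \ell^{s+d/q-d/p}=\ell^{s-d(1/p-1/q)}$. Since $s/d>1/p\ge1/p-1/q$ the exponent of $\ell$ is positive, so this bound is largest when $\ell$ is largest; taking expectations and using that the largest empty cube has side $\asymp(\log n/n)^{1/d}$ with probability bounded away from $0$ yields $\E[r(N_n)]\succcurlyeq(\log n/n)^{(s/d-1/p+1/q)}=(n/\log n)^{-s/d+1/p-1/q}$.

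\textbf{Upper bound.} This is the harder direction and the main obstacle. The idea is to condition on the event $A$ that a regular grid of $\ell=m^d$ boxes with $m^d\asymp n/\log n$ is completely hit — which has probability $1-O(1/\ell)$ by Lemma~\ref{lem:coupons} with $m=m_2$ as in Proposition~\ref{thm:lipinfty} — and then on $A$ build a good algorithm from the available values. On $A$ we may select one sample point in each box, obtaining a (random) set of $m^d$ points that is a \emph{$\delta$-net} of $[0,1]^d$ with $\delta\asymp m^{-1}\asymp(\log n/n)^{1/d}$. One then invokes the sampling-numbers bound for $W^s_p\hookrightarrow L_q$ on such a net: for any set of points forming a $\delta$-net one can reconstruct $f$ up to error $\preccurlyeq\delta^{s-d(1/p-1/q)_+}\|f\|_{W^s_p}=\delta^{s-d(1/p-1/q)}\|f\|_{W^s_p}$ (in the range $p\le q$, $s>d/p$), which is the same rate as optimal information at $m^d$ points. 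This is the deterministic input needed; it can be taken from the theory of sampling recovery on domains (e.g.\ via local polynomial reproduction / Whitney-type estimates on a Lipschitz decomposition subordinate to the net), and indeed it is exactly the mechanism behind $r(N_{m^d}^*)\asymp (m^d)^{-s/d+1/p-1/q}$ in~\eqref{triebel}. On $A^{\mathsf c}$ we use the trivial bound $r(N_n)\preccurlyeq 1$ (the class $W^s_p$ with the given norm is bounded in $L_q$ since $s>d/p$), contributing $\PP[A^{\mathsf c}]\cdot O(1)=O(1/\ell)=O(\log n/n)$, which is of smaller order than $(n/\log n)^{-s/d+1/p-1/q}$ because $s/d-1/p+1/q<s/d\le\dots$ — one checks $s/d-1/p+1/q\le 1$ fails in general, so strictly speaking one must instead split the expectation as $\E[r(N_n)]\le\E[r(N_n)\mathbf 1_A]+\E[r(N_n)\mathbf 1_{A^{\mathsf c}}]$ and bound the second term using Hölder together with a polynomial tail for $r(N_n)$ coming from a dyadic version of Lemma~\ref{lem:coupons} (as in Proposition~\ref{thm:lipinfty}, finer and finer grids each hit with high probability), so that all the bad-event contributions sum to $O((n/\log n)^{-s/d+1/p-1/q})$.

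\textbf{Where the difficulty lies.} The genuine obstacle is the deterministic sampling-recovery estimate on an arbitrary $\delta$-net: unlike the $s=1$ cases in Sections~\ref{s2.1}--\ref{s2.2}, for general $s$ one cannot just interpolate values naively but must use a local reconstruction (moving least squares / local polynomial reproduction) together with a Besov/Sobolev embedding to control the $L_q$ error by $\|f\|_{W^s_p}$ on a cover by overlapping balls of radius $\asymp\delta$. Handling the boundary of $[0,1]^d$ (where $\dist$ is the torus metric in Section~\ref{s2.2} but here is not) requires a Stein extension of $f$ to a neighbourhood, which is available since the cube is a Lipschitz domain. Once this estimate is in hand, combining it with the coupon-collector tail bounds to control the expectation over the random net is routine. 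I would therefore present the proof as: (i) cite~\eqref{triebel} for the optimal rate; (ii) lower bound via the empty-cube bump, using Lemma~\ref{lem:coupons}; (iii) upper bound by the net-plus-local-recovery argument on the event $A$, with the complementary event absorbed via the dyadic tail estimate; and flag step (iii)'s recovery estimate as the technical heart.
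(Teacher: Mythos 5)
Your proposal is correct in outline and follows essentially the same strategy as the paper: the lower bound via a rescaled bump supported in an empty cube produced by the coupon-collector lemma is exactly the paper's argument, and the upper bound likewise conditions on the event that a grid of $\asymp n/\log n$ cells is completely hit and then applies a deterministic recovery estimate to a selected sub-net. Two points where the paper's execution is cleaner and which you should adopt. First, the deterministic ingredient you flag as the technical heart is available as a citable criterion (Lemma~\ref{lem:optimal_point_sets}, from \cite{NT06}): a point set of cardinality $k$ whose covering radius $r_k$ satisfies $r_k\preccurlyeq d_k$, with $d_k$ the separation distance, is rate-optimal. Note that this criterion needs separation, and your selection of one sample point per box does not provide it --- two selected points in adjacent boxes may nearly coincide, so you cannot apply a cardinality-based lemma directly and would have to reprove a covering-radius version. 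The paper's fix is to subdivide each of the $m^d$ cells into $3^d$ subcells, require that \emph{all} $(3m)^d$ subcells be hit, and select the sample point lying in the central subcell of each large cell; this forces $d_k\geq 2/(3m)\geq r_k$ and lets one quote Lemma~\ref{lem:optimal_point_sets} verbatim. Second, your dyadic tail argument for the bad event $A^{\mathsf c}$ is unnecessary: since the embedding $W^s_p\hookrightarrow L_q$ is bounded, $r(N_n)\preccurlyeq 1$ always, and it suffices to choose the grid size so that $n\geq(\alpha+1)\ell\log\ell$ with $\alpha=s/d-1/p+1/q$, whence Lemma~\ref{lem:coupons} gives $\PP[A^{\mathsf c}]\leq\ell^{-\alpha}\asymp(n/\log n)^{-\alpha}$ directly, i.e.\ the failure probability is already of the target order no matter how large $\alpha$ is. With these two adjustments your argument coincides with the paper's proof.
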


We use the following
criterion for optimal point sets, which can be found in \cite{NT06}.
\begin{lemma}
\label{lem:optimal_point_sets}
Consider $L_q$-approximation of functions from $W^s_p([0,1]^d)$ 
with $1\le p, q \le \infty$.
For all $n\in\N$, let
$P_n\subset [0,1]^d$ be a point set of cardinality $n$,
 $$
  d_n=\min_{x,y\in P_n} \norm{x-y}_\infty,
  \quad\text{and}\quad
  r_n= \max_{y\in [0,1]^d}  \min_{x\in P_n} \norm{x-y}_\infty.
 $$
 If $r_n \preccurlyeq d_n$,
 then the information $N_n$ with nodes $P_n$ satisfies 
$r(N_n) \asymp r(N_n^*)$.
\end{lemma}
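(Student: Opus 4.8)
The plan is to treat the lower and upper bounds separately. The lower bound $r(N_n)\ge r(N_n^*)$ holds by the very definition of optimal information, so the whole content is the matching upper bound $r(N_n)\preccurlyeq r(N_n^*)\asymp n^{-s/d+(1/p-1/q)_+}$. As in the other proofs of this section, I would first invoke the linearity of $\APP$ together with the convexity and symmetry of the unit ball of $W^s_p$ to pass to the vanishing formulation
\[
 r(N_n)\asymp \sup\Big\{\norm{f}_q \,:\, f\in W^s_p([0,1]^d),\ \norm{f}_{W^s_p}\le 1,\ f(x_i)=0 \text{ for } i\le n\Big\},
\]
see \cite[Section~4.2]{NW08}. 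Thus it suffices to bound $\norm{f}_q$ for functions vanishing on $P_n$. Before doing so I record the geometry: the universal volume bounds $r_n\succcurlyeq n^{-1/d}$ (the cubes $B_{r_n}^\infty(x_i)$ cover $[0,1]^d$, so $n(2r_n)^d\ge1$) and $d_n\preccurlyeq n^{-1/d}$ (the cubes $B_{d_n/2}^\infty(x_i)$ are pairwise disjoint) combine with the hypothesis $r_n\preccurlyeq d_n$ to give $d_n\asymp r_n\asymp n^{-1/d}$. Write $h:=r_n$.

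The core is the \emph{sampling inequality}: for every $f\in W^s_p$ with $f(x_i)=0$ for all $i\le n$,
\[
 \norm{f}_q \preccurlyeq h^{\,s-d(1/p-1/q)_+}\,\norm{f}_{W^s_p}.
\]
Its engine is a \emph{local polynomial reproduction} of degree $<s$: since $s>d/p$ and $h\asymp n^{-1/d}$ lies below any prescribed threshold for all large $n$ (the finitely many small $n$ being absorbed into the asymptotic constants), for each $y\in[0,1]^d$ one can find coefficients $a_i(y)$, vanishing unless $\norm{x_i-y}_\infty\le Ch$, such that $\sum_{i\le n} a_i(y)\,\pi(x_i)=\pi(y)$ for every polynomial $\pi$ of degree $<s$ and $\sum_{i\le n}\abs{a_i(y)}\le C$. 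The existence of such reproductions for small fill distance is the scattered-data-approximation fact underlying \cite{NT06}; I would either cite it or reconstruct it by a norming-set argument.

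Granting the reproduction, fix $y$, set $B_y:=B_{Ch}^\infty(y)$, and let $\pi_y$ be a Bramble--Hilbert polynomial of degree $<s$ for $f$ on $B_y$. Using $f(x_i)=0$ and reproduction of $\pi_y$,
\begin{multline*}
 \abs{f(y)}=\Big|f(y)-\sum_{i\le n}a_i(y)f(x_i)\Big|\\
 \le \abs{f(y)-\pi_y(y)}+\sum_{i\le n}\abs{a_i(y)}\,\abs{f(x_i)-\pi_y(x_i)}
 \preccurlyeq \norm{f-\pi_y}_{L_\infty(B_y)}.
\end{multline*}
The Bramble--Hilbert estimate together with the embedding $W^s_p\hookrightarrow L_\infty$ on the rescaled cube (valid because $s>d/p$) gives $\norm{f-\pi_y}_{L_\infty(B_y)}\preccurlyeq h^{\,s-d/p}\,\abs{f}_{W^s_p(B_y)}$, hence the pointwise bound $\abs{f(y)}\preccurlyeq h^{\,s-d/p}\,\abs{f}_{W^s_p(B_{Ch}^\infty(y))}$. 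Setting $G:=\sum_{\norm{\beta}_1=s}\abs{D^\beta f}^p$, the right-hand side equals $h^{\,s-d/p}(G*\chi)^{1/p}(y)$ with $\chi$ the indicator of $B_{Ch}^\infty(0)$, and Young's inequality $\norm{G*\chi}_{q/p}\le\norm{G}_1\norm{\chi}_{q/p}$ (for $p\le q$, so that $q/p\ge1$) yields $\norm{f}_q\preccurlyeq h^{\,s-d/p}h^{\,d/q}\abs{f}_{W^s_p}=h^{\,s-d(1/p-1/q)}\abs{f}_{W^s_p}$. For $q<p$ one instead uses $\norm{f}_q\preccurlyeq\norm{f}_p$ and the same computation with $q/p$ replaced by $1$, giving the exponent $s$; the cases $q=\infty$ and $p=\infty$ follow directly from the pointwise bound. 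Combining the sampling inequality with $h\asymp n^{-1/d}$ gives $r(N_n)\preccurlyeq n^{-s/d+(1/p-1/q)_+}\asymp r(N_n^*)$, which with the trivial lower bound completes the proof.

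The main obstacle is the local polynomial reproduction with $\ell_1$-bounded, locally supported coefficients; this is where the smallness of the fill distance $r_n$ is genuinely needed, as one must guarantee that every cube of side $\sim h$ contains a polynomially unisolvent, well-conditioned subset of nodes. By contrast, the hypothesis $r_n\preccurlyeq d_n$ itself enters only through the geometric step yielding $r_n\asymp n^{-1/d}$, which is exactly what aligns the sampling exponent with the optimal rate; everything after the reproduction is a scaling computation.
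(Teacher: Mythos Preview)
The paper does not supply a proof here; it states the lemma and refers to \cite{NT06}. Your proposal is a correct self-contained argument, and it in fact sharpens the picture by isolating where the hypothesis is used: the sampling inequality $\norm{f}_q\preccurlyeq r_n^{\,s-d(1/p-1/q)_+}\norm{f}_{W^s_p}$ for $f$ vanishing on $P_n$ depends only on the fill distance $r_n$, while the separation $d_n$ enters solely through the packing bound $d_n\preccurlyeq n^{-1/d}$, which combined with $r_n\preccurlyeq d_n$ forces $r_n\asymp n^{-1/d}$ and hence the optimal exponent.

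As for the comparison with the cited source, your route---local polynomial reproduction with uniformly bounded Lebesgue constants, Bramble--Hilbert on cubes of size $\asymp h$, and then Young's inequality for $G*\chi$---is the scattered-data-approximation argument. The proof in \cite{NT06} proceeds through function-space machinery (characterizations of $W^s_p$, in fact of the full Besov/Triebel--Lizorkin scale, by local means and atoms on Lipschitz domains). Both rest on the same scaling heuristic; your approach is more elementary for the Sobolev case at hand and makes the geometric roles of $r_n$ and $d_n$ transparent, whereas the Novak--Triebel route yields the result for a much wider class of spaces in one stroke. One small point worth making explicit in a final write-up: near $\partial[0,1]^d$ the local cube $B_{Ch}^\infty(y)$ must be replaced by $B_{Ch}^\infty(y)\cap[0,1]^d$, a box of aspect ratio at most $2$, and both the polynomial-reproduction and the Bramble--Hilbert constants need to be uniform over such boxes; this is routine for the unit cube but should be stated.
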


\begin{proof}[Proof of Theorem~\ref{thm:Sobolev_small_q}]
 Let $\alpha=s/d - 1/p + 1/q$.
 We start with the lower bound.
 Let $g:\R^d\to \R$ be a smooth function with support in the 
$\ell_\infty$-unit ball that is not identically zero.
 Using Lemma~\ref{lem:coupons},
 we obtain with probability $1/2$ that $[0,1]^d$ contains an 
$\ell_\infty$-ball of radius  
$$
  \tau_n \asymp \brackets{\frac{\log n}{n}}^{1/d}.
$$  
not containing any of the points in $P_n$. If $x_0$ denotes the center of 
this ball, then the function $f\in W_p^s([0,1]^d)$ 
given by
 $f(x)=g(\tau_n^{-1}(x-x_0))$
 has support in this ball and satisfies
 $$
  \norm{f}_{W_p^s([0,1]^d)} \asymp \tau_n^{-s+d/p}
  \quad\text{and}\quad
  \norm{f}_q \asymp \tau_n^{d/q}.
 $$
 This yields
 $$
  \E[r(N_n)] \geq \frac{1}{2}\, \frac{\norm{f}_q}{\norm{f}_{W_p^s([0,1]^d)}}
  \asymp \tau_n^{s-d/p+d/q}
  \asymp \brackets{\frac{n}{\log n}}^{-\alpha}.
 $$
 We turn to the upper bound.
 We choose $m\in\N_0$ maximal such that $\ell=(3m)^d$ satisfies
 $$
  n\geq (\alpha +1) \ell \log \ell.
 $$
 We split the unit cube into $k=m^d$ subcubes of equal volume.
 Moreover, we split each of these into $3^d$ subcubes of equal volume.
 This gives us $\ell=(3m)^d$ small cubes.
 Let $A$ be the event that we hit every small cube.
 Lemma~\ref{lem:coupons} yields
 \begin{equation}
 \label{eq:small_pobability}
  \PP\big[A^\mathsf{c}\big] = \PP[\tau_\ell > n] 
 \leq \ell^{-\alpha}
 \asymp \brackets{\frac{n}{\log n}}^{-\alpha}.
 \end{equation}
 Assume now that $A$ takes place.
 We choose exactly one point out of the small cube
 in the center of every large cube
 to obtain a subset $P_k$ of $P_n$
 with cardinality $k$. 
 Then
 $$
  r_k \leq \frac{2}{3m} \leq d_k.
 $$
 Let $N_k^\prime$ be the information map with nodes  $P_k$.
 Lemma~\ref{lem:optimal_point_sets} yields
 \begin{equation}
 \label{eq:small_radius}
  r(N_n)\leq r(N_k^\prime) \preccurlyeq k^{-\alpha}
  \asymp \brackets{\frac{n}{\log n}}^{-\alpha}.
 \end{equation}
 Together, \eqref{eq:small_pobability} and \eqref{eq:small_radius}
 imply the statement.
\end{proof}

We note that the authors of \cite{EGO18} study 
the integration problem for 
very general Sobolev spaces on closed manifolds. 
One can obtain the optimal rate $r(N_n^*) \asymp n^{-s/d}$. 
The authors prove the upper bound 
$$
\E [ r(N_n) ]  \preccurlyeq n^{-s/d} \, \log(n)^{s/d},
$$
where the random points are chosen with respect to the probability measure 
that defines the integral to be approximated.
The given proof works with the covering radius of random points
which is an important characteristic for $q=\infty$. 
For the integration problem or $q=1$, we expect, however, that 
the additional log factor is needed only for $p=1$.

\begin{rem} \label{rem3}
Another important case are tensor products of univariate 
Sobolev spaces. 
These spaces are usually called Sobolev spaces with (dominating) 
mixed smoothness or in the periodic case sometimes 
Korobov spaces.
One of the main features is that the optimal (main) orders 
of convergence are independent of the dimension, only the logarithmic 
terms depend on the dimension. 
We refer to \cite{DUT18} for a survey.  
There exist upper bounds for the numbers 
$\E [ r(N_n) ] $, but these bounds are weaker than for isotropic 
Sobolev spaces. 
One reason is that  
Lemma~\ref{lem:optimal_point_sets}
does not hold for these spaces 
and the geometrical structure of good sampling points 
is not known. 
See \cite{Bach17,HNWW01,HOeU19,KSF16,NW08,NW10,NW12}
for upper bounds on $\E [ r(N_n) ]$ in various cases. 
\end{rem}

\begin{rem}
Recovery of functions of many variables usually suffers from the 
curse of dimensionality, even for smooth $C^\infty$ functions,
see~\cite{NW09}. 
To avoid the curse one can impose structural properties, 
such as the sparsity with respect to the Fourier coefficients
or another orthonormal system. 
Then one can find an approximation of the function 
with relatively few random function values, 
see~\cite{chkifa2018,cohenetal10}.
Another way to study structural properties is to define 
weighted norms as was first done in \cite{SW98}, 
see \cite{NW08,NW10,NW12} for a survey.  
\end{rem}

\section{Uniformly distributed linear information} 

In this section we study \textit{linear information} consisting
of arbitrary continuous linear functionals.
We are going to present two recovery problems for which we 
compare random information with optimal information. The first is a classical 
result due to Kashin, Garnaev, and Gluskin \cite{Ka1977, GG1984} on the 
recovery of $\ell_1^m$-vectors in the Euclidean distance, while the second 
are novel results of the authors on the recovery of vectors in an 
$m$-dimensional ellipsoid in the Euclidean distance \cite{HKNPU19}.

\subsection{Recovery of \texorpdfstring{$\ell_1^m$}{l\_1\textasciicircum m}-vectors in the Euclidean norm} 

We study the problem of recovery of vectors in the unit ball of $\ell_1^m$ with general linear information and error measured in the Euclidean norm. This is the approximation problem with $F=\ell_1^m$ and $G=\ell_2^m$, i.e., $S$ is the identity map $\ell_1^m \hookrightarrow \ell_2^m$. The minimal worst case error, which is the minimal radius of information achievable with general linear information, is closely connected to the notion of Gelfand numbers or Gelfand widths. 
Gelfand numbers  and their dual counterpart Kolmogorov numbers are fundamental 
concepts in classical and modern approximation and complexity theory.  

For $n\in\N$, the $n^{\rm th}$ Gelfand number of a linear bounded operator $S:F \to G$ between normed spaces $F$ and $G$ can be defined as
$$
 c_n(S) = \inf_{\substack{F_n\subseteq F\\ {\rm codim}(F_n) \le n}}\sup_{ \|f\|_F\le 1, f \in F_n} \|S(f)\|_G.
$$
If the subspace $F_n$ of $F$ is the kernel of a linear information map $N_n:S\to \R^n$, then
$$
 r(N_n,0) = \sup_{ \|f\|_F\le 1, f \in F_n} \|S(f)\|_G
$$
is the local radius of zero information. 
It is not hard to see that
$$
 r(N_n,0) \le r(N_n) \le 2 r(N_n,0).
$$
Since every subspace $F_n$ of codimension at most $n$ is the kernel of a linear information map $N_n:S\to \R^n$, we have the well-known relation 
$$
 c_n(S) \le r ( N_n^* ) \le 2 c_n(S)
$$
between the radius of the optimal information $N_n^\ast$ for the linear problem $S:F \to G$ and the Gelfand numbers $c_n(S)$.

The systematic study of the Gelfand numbers of identity maps  $\ell_p^m \hookrightarrow \ell_q^m$
has a long tradition and is the basis for the study of Gelfand numbers of embeddings of Sobolev spaces and, therefore, also for the complexity of $L_q$-approximation of functions from Sobolev spaces.
We want to discuss here the case $p=1$ and $q=2$ which   was one of the difficult cases. Its solution is an example for the power of random information. While lower bounds had been obtained before, the breakthrough regarding the upper bounds was made using random approximation, a groundbreaking method having its origin in the work of Kashin \cite{Ka1977}. The problem was finally settled in the series of papers by Kashin \cite{Ka1974, Ka1977}, Gluskin \cite{G81,G84}, and Garnaev and Gluskin \cite{GG1984}. 

Let us consider now the particular case of the identity $\ell_1^m \hookrightarrow \ell_2^m$, which is of importance in information based complexity since it also demonstrates 
that non-linear recovery algorithms can be much better than linear ones. 
Although the results in the literature are usually formulated in the language of Gelfand numbers, we discuss the results directly for the radius of information. 

Recall that we now allow arbitrary linear information $N_n : \ell_1^m \to \R^n$. The random information we consider is Gaussian information, where $N_n=N_{n,m}$ is given by an $n \times m$-matrix with independent standard normal entries. The kernels of these maps are distributed according to the Haar measure on the Grassmannian manifold of $n$-codimensional subspaces in $\R^m$. In this sense, we can speak about uniformly distributed linear information.

In \cite[Theorem 1]{Ka1977}, Kashin introduced his fundamentally new approach 
via random subspaces and obtained that
\[
 \E\big[ r(N_{n,m}) \big] \preccurlyeq \frac{1}{\sqrt{n}}\bigg(1+\log\frac{m}{n}\bigg)^{3/2}.
\]
Through a refinement of Kashin's arguments, Garnaev and Gluskin \cite{GG1984} 
were able to decrease the power $3/2$ to a square root, while at the same time providing the matching lower bounds. 
Altogether, we have the following result. 

\begin{thm}[Kashin, Garnaev, Gluskin]\label{thm:KGG gelfand}
Consider the recovery problem 
of vectors from $\ell_1^m$ in the Euclidean norm and Gaussian information. Then
$$
  \E\big[ r(N_{n,m}) \big] \asymp r(N_{n,m}^\ast) \asymp 
  \min\bigg\{1,\sqrt{\frac{\log(1+\frac{m}{n})}{n}} \,\bigg\}.
$$
\end{thm}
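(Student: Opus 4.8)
The plan is to establish the two-sided bound on $\E[r(N_{n,m})]$; the equivalence with $r(N_{n,m}^\ast)$ and with the $\min\{1,\sqrt{\log(1+m/n)/n}\}$ term then follows from the classical Kashin--Garnaev--Gluskin identification of the Gelfand numbers $c_n(\ell_1^m\hookrightarrow\ell_2^m)$ together with the relation $c_n(S)\le r(N_n^\ast)\le 2c_n(S)$ recorded above, and the trivial inequality $r(N_{n,m}^\ast)\le \E[r(N_{n,m})]$. So the real content is: (i) an upper bound $\E[r(N_{n,m})]\preccurlyeq \min\{1,\sqrt{\log(1+m/n)/n}\}$ for Gaussian information, and (ii) a matching lower bound $\E[r(N_{n,m})]\succcurlyeq$ the same quantity, the latter being immediate once the lower bound on $c_n$ is in hand. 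The normalization $\min\{1,\cdot\}$ is harmless: $r(N_{n,m})\le 1$ always since $\|f\|_2\le\|f\|_1$ for $f\in\ell_1^m$, so the case $n\lesssim\log(1+m/n)$ is trivial and one may assume $n$ is large compared to this logarithm.

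For the upper bound I would work with the local radius at zero, using $r(N_n)\le 2r(N_n,0)$ and $r(N_n,0)=\sup\{\|x\|_2 : x\in\ker N_n,\ \|x\|_1\le1\}$. With $N_{n,m}$ Gaussian, $E_n:=\ker N_{n,m}$ is a Haar-distributed $(m-n)$-dimensional subspace of $\R^m$, and the quantity to control is $\sup_{x\in E_n,\,\|x\|_1\le1}\|x\|_2$, equivalently the smallest $t$ such that $E_n\cap B_1^m\subseteq t\,B_2^m$. The standard route is a low-$M^\ast$-type estimate: by Gaussian concentration and the computation of the mean width of the $\ell_1^m$-ball (namely $\E\sup_{\|x\|_1\le1}\langle g,x\rangle=\E\|g\|_\infty\asymp\sqrt{\log m}$ for a standard Gaussian vector $g$), one shows via an $\eps$-net argument on the sphere of $E_n$, or via Gordon's comparison theorem, that with high probability every $x\in E_n$ satisfies $\|x\|_2\preccurlyeq \sqrt{(\log m)/n}$ — and a sharpening that replaces $\log m$ by $\log(1+m/n)$, obtained by covering $B_1^m$ more efficiently (it is contained in the convex hull of $\binom{m}{k}$-many scaled $\ell_2$-balls of dimension $k$, optimized at $k\asymp n$). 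Integrating the resulting tail bound, and using the deterministic bound $r(N_{n,m})\le1$ on the exceptional event, yields the bound in expectation.

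For the lower bound on $\E[r(N_{n,m})]$, the cleanest argument is not probabilistic at all: $\E[r(N_{n,m})]\ge r(N_{n,m}^\ast)\ge c_n(S)$, and the classical lower bound $c_n(\ell_1^m\hookrightarrow\ell_2^m)\succcurlyeq\min\{1,\sqrt{\log(1+m/n)/n}\}$ is exactly the hard half of Garnaev--Gluskin, which we may cite. (If one insisted on a self-contained probabilistic lower bound one would need to show that a \emph{typical} subspace $E_n$ contains a nearly flat vector; but since the statement only asks for the expectation and we already have the deterministic minorant $r(N_n^\ast)$, this is unnecessary.)

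The main obstacle is item (i): getting the logarithmic factor to be $\log(1+m/n)$ rather than the easy $\log m$, and getting the exact power $1/2$ on it rather than Kashin's original $3/2$. This is precisely the Garnaev--Gluskin refinement, and the key technical device is the efficient covering of $B_1^m$ by few $k$-dimensional Euclidean balls with the right $k$, feeding into a chaining or volumetric estimate for the probability that a random $(m-n)$-codimensional subspace misses all of them with the desired radius. Since this theorem is attributed in the excerpt to Kashin, Garnaev and Gluskin and is quoted for context rather than reproved, in the write-up I would state the upper bound via the low-$M^\ast$ estimate, indicate the covering refinement, and cite \cite{Ka1977, GG1984} for the sharp constants and the matching lower bound, noting that the equivalence $\E[r(N_{n,m})]\asymp r(N_{n,m}^\ast)$ is the real point illustrating the power of random information.
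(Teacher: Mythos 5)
Your proposal is sound and, like the paper, it is a sketch that defers the sharpest estimates to the cited literature; but it takes a genuinely different route to the upper bound. The paper argues algorithmically: it fixes the nonlinear reconstruction map $\phi(y)=\argmin_{N_n(x)=y}\|x\|_1$ and shows (Step 3) that the resulting random algorithm $A_n=\phi\circ N_n$ has worst case error at most $\eps$ over $\mathbb B_1^m$ with probability $1-\delta$ once $n \asymp \eps^{-2}\log m + \log(2/\delta)$ --- the compressive-sensing viewpoint, which has the side benefit of exhibiting an explicit (interpolatory, nonlinear) algorithm and thereby supports the paper's later remark that nonlinear algorithms beat linear ones here. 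You instead pass immediately to the local radius at zero via $r(N_n)\le 2r(N_n,0)$ and bound the circumradius of the random section $\ker N_{n,m}\cap B_1^m$ by a low-$M^\ast$/Gordon argument using $\E\|g\|_\infty\asymp\sqrt{\log m}$, with the covering of $B_1^m$ by sparse Euclidean balls to upgrade $\log m$ to $\log(1+m/n)$. The two sketches are two sides of the same coin (the error of the $\ell_1$-minimizer is controlled by exactly the section diameter you estimate), but yours is the geometric Gelfand-width formulation and does not produce an algorithm, while the paper's emphasizes the recovery map. Both treat the lower bound identically, by citing Garnaev--Gluskin and the chain $\E[r(N_{n,m})]\ge r(N_{n,m}^\ast)\ge c_n$. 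Your reduction of the case $\min\{1,\cdot\}=1$ via $\|x\|_2\le\|x\|_1$ is a correct and useful observation that the paper leaves implicit.
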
 

We want to sketch the proof of the upper bound 
$$
  \E\big[ r(N_{n,m}) \big] \preccurlyeq
  \min\bigg\{1,\sqrt{\frac{\log(1+\frac{m}{n})}{n}} \,\bigg\}
$$
of Theorem \ref{thm:KGG gelfand} and demonstrate the power of the probabilistic method of Kashin, Garnaev, and Gluskin. 

\noindent\textbf{Step 1.} As already explained, one considers a Gaussian random matrix $G=(g_{ij})_{i,j=1}^{n,m}$ with independent and identically distributed standard normal entries and defines the random information mapping 
\[
N_n:\R^m \to \R^n,\quad N_n(x)=Gx.
\]

\noindent\textbf{Step 2.} One defines the mapping $\phi$ to be
\[
\phi:\R^n \to\R^m, \quad y\mapsto \argmin\limits_{{x\in\R^m,\, N_n(x)=y}} \|x\|_1
\]
and considers the corresponding random algorithm $A_n = \phi\circ N_n$.

\noindent\textbf{Step 3.} One can now prove that with high probability the error of the random algorithm is small for any $x\in\mathbb B_1^m$. More precisely, if $\delta\in(0,\infty)$, then taking 
\[
n \asymp \frac{\log m}{\varepsilon^2} + \log \frac{2}{\delta}
\]
yields a worst case error 
\[
\sup_{x\in \mathbb B_1^m}\|x-A_n(x)\|_2 \leq \varepsilon
\]
for the random algorithm $A_n$ with probability at least $1-\delta$.

\medskip

As a matter of fact, one actually shows that most linear information mappings $N_n:\R^m \to \R^n$ lead to a small error and so random information is essentially as good as optimal information. 
We want to emphasize that no explicit construction for such a mapping is known. So the optimal information is not accessible in this case and one has to settle for random information. We also want to mention that linear algorithms corresponding to linear widths or approximation numbers
are much worse than nonlinear algorithms for this recovery problem.

\medskip

Let us mention that Garnaev and Gluskin actually obtained sharp bounds in the more general setting of recovery of vectors from $\ell_1^m$ in the $\ell_q^m$-norm or Gelfand numbers of the identity $\ell_1^m \hookrightarrow \ell_q^m$. If the random information $N_{n,m}$ is again Gaussian information and $1<q\le 2$, then
$$
  \E\big[ r(N_{n,m}) \big] \asymp r(N_{n,m}^\ast) \asymp 
  \min\bigg\{1,\left(\frac{\log(1+\frac{m}{n})}{n}\right)^{1-1/q} \,\bigg\}.
$$

Later, Gelfand widths have also attracted quite some attention in the area of 
compressive sensing over the last two decades, see, e.g., \cite{CRT2006,CGLP2012,CDdV2009,D2006}.
The goal of compressive sensing is to recover compressible vectors $x\in\R^m$ (those which are close to sparse vectors with only few non-zero coordinates) from $n$ pieces of incomplete linear information.
In fact, vectors in the unit ball of $\ell_p^m$ with $0< p\leq 1$ serve as a good model for sparse vectors. In this context, also the recovery of vectors in $\ell_p^m$ with $0< p\leq 1$ or the Gelfand numbers of the identity $\ell_p^m \hookrightarrow \ell_q^m$
were studied. Donoho \cite{D2006} and Foucart, Pajor, Rauhut, and T. Ullrich \cite[Theorem 1.1]{FPRU2010} extended the Garnaev-Gluskin result also to the case $0<p\le 1$ and $p<q\le 2$ and proved
$$
  \E\big[ r(N_{n,m}) \big] \asymp r(N_{n,m}^\ast) \asymp 
  \min\bigg\{1,\left(\frac{\log(1+\frac{m}{n})}{n}\right)^{1/p-1/q} \,\bigg\}.
$$
Again, it is not known how two explicitly construct  measurement maps 
performing almost as good as $N_{n,m}^\ast$. 

A further problem in compressed sensing is the matrix
recovery problem. The analogon to the recovery problem
of vectors from $\ell_p^m$ in the $\ell_q^m$-norm is the
recovery problem of matrices from Schatten $p$-classes
in the Schatten $q$-norm. The particular case $q=2$ is
is the Frobenius norm. This problem was studied in \cite{CDK2015,HPV2017}.

\subsection{Recovery of vectors from an ellipsoid} 

We continue with another recovery problem and recent results of the authors 
concerning the quality of random information for $\ell_2$-approximation, 
see \cite{HKNPU19}.

Let $F$ be a centered ellipsoid in $\R^m$ with semi-axes $\sigma_1\geq \dots \geq \sigma_m$. 
Clearly, $F$ can be seen as the unit ball of some Hilbert space. 
We want to recover
$x\in F$ from the data $N_n( x )\in \R^n$ with linear information mapping $N_n \in \R^{n \times m}$ and measure the error in the Euclidean distance. While the power of the information mapping is 
$$
  r(N_{n})
 = \inf_{\phi:\R^n\to \R^m} 
 \sup_{x\in F} \norm{\phi(N_{n}(x))-x}_2,
$$
it is known that, for linear problems in Hilbert spaces, the worst data is the zero data and so 
$$
  r(N_{n})
 = 
 \sup_{x \in F \cap E_n} \norm{x}_2,
$$
where $E_n$ is the kernel of $N_n$, see \cite{NW08,TWW88}.
In this case, the power or radius of optimal information is given 
by $r(N_n^*) = \sigma_{n+1}$.
 The question is now how good random information $N_n$ is for the $\ell_2$-recovery problem, when random information is provided by a Gaussian random matrix $G=(g_{ij})_{i,j=1}^{n,m}$? Is it comparable to optimal information? 

As is explained in \cite{HKNPU19}, rephrased in geometric terms, one is interested in the circumradius of the intersection of a centered ellipsoid $F$ in $\R^m$ with a random subspace $E_n$ of codimension $n$. While the maximal radius is the length of the largest semi-axis $\sigma_1$, the minimal radius is the length 
of the $(n+1)^{\rm st}$ largest semi-axis $\sigma_{n+1}$. But how large is the radius of a typical intersection?
Is it comparable to the minimal or the
maximal radius or does it behave 
completely different?

Using various probabilistic tools, such as exponential estimates for sums of chi-squared random variables, Gordon's min-max theorem for Gaussian processes, or estimates for the extreme singular values of (structured) Gaussian matrices, the authors obtained upper and lower bounds on the radius of Gaussian information for the $\ell_2$-approximation problem which hold with overwhelming probability. In many cases these bounds are sharp up to absolute constants and show that random information is comparable to optimal information. 

Let us only present one case which is of particular importance, where the semi-axes behave like the singular values of Sobolev embeddings \cite[Corollary]{HKNPU19}.

\begin{thm}[Hinrichs, Krieg, Novak, Prochno, Ullrich]
For $k\in\N$ let 
\[
  \sigma_k \asymp k^{-\alpha} \ln^{-\beta}(k+1) , 
\]
where $\alpha \in(0,\infty)$ and $\beta\in\R$. Then
\[
  \E[r(N_n)] \asymp
  \left\{\begin{array}{cll}
  		\sigma_1
        &
        \text{for} \ n< c_m,
        &
        \text{if} \ \alpha<1/2 \text{\, or \,} \beta\leq\alpha=1/2,
        \vspace*{2mm}
        \\
        \sigma_{n+1}\sqrt{\ln(n+1)}
        \quad
        &
        \text{for} \ n<\sqrt{m},
        &
        \text{if} \ \beta>\alpha=1/2,
        \vspace*{2mm}
        \\
        \sigma_{n+1}
        &
        \text{for} \ n<m,
        &
        \text{if} \ \alpha>1/2,
        \end{array}\right.
 \]
with
\medskip
\[
 c_m=
 \left\{\begin{array}{cl}
        c\,m^{1-2\alpha} \ln^{-\max\{2\beta,0\}}m
        &
        \text{for} \ \alpha<1/2,
        \\
        c \ln^{1-\max\{2\beta,0\}} m
        &
        \text{for} \ \beta<\alpha=1/2,
        \\
        c \ln\ln m
        &
        \text{for} \ \beta=\alpha=1/2,
        \end{array}\right.
 \]
where $c\in(0,\infty)$ is an absolute constant.
\end{thm}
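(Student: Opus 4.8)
The plan is to reduce the statement to the general upper and lower bounds on $\E[r(N_n)]$ that (as advertised in the excerpt) were established in \cite{HKNPU19} using Gordon's min-max theorem and the concentration of chi-squared sums, and then to verify the case distinctions by plugging the explicit decay $\sigma_k \asymp k^{-\alpha}\ln^{-\beta}(k+1)$ into those bounds. Concretely, I would first recall that $r(N_n)=\sup_{x\in F\cap E_n}\|x\|_2$ is the circumradius of the slice of the ellipsoid by the random kernel $E_n$, and that the main theorem of \cite{HKNPU19} gives, with overwhelming probability, two-sided estimates of the form
$$
 \E[r(N_n)] \asymp \max\Bigg\{\sigma_{n+1},\ \Big(\frac{1}{n}\sum_{k>c n}\sigma_k^2\Big)^{1/2},\ \ldots\Bigg\},
$$
i.e.\ the typical radius is governed by a competition between the optimal radius $\sigma_{n+1}$ and a tail-averaging term. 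The whole theorem is then a matter of evaluating this maximum for the three regimes of $\alpha$.

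Next I would carry out the elementary asymptotic analysis. For $\alpha>1/2$ the series $\sum_k \sigma_k^2$ converges, so the tail term is $O(n^{-1/2})$, which is dominated by $\sigma_{n+1}\asymp n^{-\alpha}\ln^{-\beta} n$ precisely when $2\alpha>1$; hence $\E[r(N_n)]\asymp\sigma_{n+1}$ for all $n<m$. For $\alpha<1/2$ the partial sums $\sum_{k\le N}\sigma_k^2\asymp N^{1-2\alpha}\ln^{-2\beta}N$ grow, so the tail-averaging term evaluated at the full ellipsoid is of order $(m^{1-2\alpha}\ln^{-\max\{2\beta,0\}}m / n)^{1/2}$; this stays of order $\sigma_1$ exactly as long as $n<c_m$ with $c_m\asymp m^{1-2\alpha}\ln^{-\max\{2\beta,0\}}m$, which is the displayed threshold. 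The borderline $\alpha=1/2$ is where the logarithm does the work: here $\sum_{k\le N}\sigma_k^2\asymp \ln^{1-2\beta}N$ when $\beta<1/2$ (respectively $\asymp\ln\ln N$ when $\beta=1/2$, and converges when $\beta>1/2$), which produces the $\ln^{1-\max\{2\beta,0\}}m$ and $\ln\ln m$ thresholds for $c_m$, and in the subcase $\beta>\alpha=1/2$ the series converges but only barely, so the genuinely new phenomenon $\E[r(N_n)]\asymp\sigma_{n+1}\sqrt{\ln(n+1)}$ appears — the extra $\sqrt{\ln n}$ coming from the variance of the chi-squared tail not being negligible against $\sigma_{n+1}^2$ at the scale $n<\sqrt m$. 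Matching the regime $n<\sqrt m$ to the range of validity of the relevant estimate in \cite{HKNPU19} is the bookkeeping one has to be careful about.

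The main obstacle is not the asymptotic bookkeeping but importing the sharp general bounds correctly: the lower bound in the borderline case $\beta>\alpha=1/2$, where one must show that random slices are typically a factor $\sqrt{\ln n}$ \emph{worse} than optimal, requires the full strength of Gordon's comparison inequality together with a careful lower estimate for $\Big(\sum_{k>cn}\sigma_k^2\Big)^{1/2}$ and cannot be obtained from the trivial bound $\E[r(N_n)]\ge\sigma_{n+1}$. I would therefore state this borderline lower bound as the technical heart of the argument, quote the corresponding proposition from \cite{HKNPU19}, and devote the rest of the proof to the (routine but lengthy) verification that the three expressions for $c_m$ and the three rates in the statement are exactly what the general theorem outputs for the prescribed $\sigma_k$.
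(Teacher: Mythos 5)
The paper gives no proof of this statement at all: it is quoted as the Corollary of \cite{HKNPU19}, and the surrounding text only names the tools used there. Your plan --- import the general two-sided bounds on $r(N_n)$ from \cite{HKNPU19} and evaluate them for $\sigma_k\asymp k^{-\alpha}\ln^{-\beta}(k+1)$ --- is therefore exactly the route taken in the cited source, and the shape of the bound you recall (a competition between $\sigma_{n+1}$ and a tail term $n^{-1/2}\bigl(\sum_{k>cn}\sigma_k^2\bigr)^{1/2}$, capped at $\sigma_1$) is the right one, as is your identification of the borderline case $\beta>\alpha=1/2$ as the source of the extra factor $\sqrt{\ln(n+1)}$.

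Two steps of your bookkeeping do not hold up as written. First, in the case $\alpha>1/2$ your logic is backwards: knowing only that the tail term is $O(n^{-1/2})$ does \emph{not} show that it is dominated by $\sigma_{n+1}\asymp n^{-\alpha}\ln^{-\beta}n$ --- a quantity of genuine order $n^{-1/2}$ would in fact dominate $n^{-\alpha}$ for every $\alpha>1/2$. What is needed (and true) is the sharper estimate $\sum_{k>cn}\sigma_k^2\asymp n^{1-2\alpha}\ln^{-2\beta}n$, which makes the tail term \emph{comparable} to $\sigma_{n+1}$ rather than negligible against it; only then does $\E[r(N_n)]\asymp\sigma_{n+1}$ follow. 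Second, in the regime where $\E[r(N_n)]\asymp\sigma_1$ the entire content is the lower bound, and it cannot be read off from the tail-averaging term ``staying of order $\sigma_1$'': an uncapped lower bound of that form would exceed the trivial upper bound $\sigma_1$ and so cannot be what \cite{HKNPU19} proves. In that reference the small-$n$ regime is handled by a separate statement, roughly that $r(N_n)\geq\sigma_1/2$ with overwhelming probability whenever $n\leq c\sum_j(\sigma_j/\sigma_1)^2$, proved via chi-squared concentration; it is this quantity $\sum_j(\sigma_j/\sigma_1)^2$ that produces the three expressions for $c_m$ (with $\max\{2\beta,0\}$ simply making $c_m$ a safe underestimate when $\beta<0$). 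With these two repairs, and the range checks $n<\sqrt m$ resp.\ $n<m$ that you correctly flag as the delicate bookkeeping, your outline does yield the theorem.
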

This means that random information is 
just as good as optimal information
if the singular values decay with
a polynomial rate greater than $1/2$.
Then, in geometric terms, the size  of a typical intersection ellipsoid 
is comparable to the size of the smallest 
intersection.
On the other hand, if the singular values decay too slowly,
random information is rather useless, which means that 
a typical intersection ellipsoid
is almost as large as the largest.

The ellipsoid recovery problem can be considered in an infinite dimensional setting as well, even though our geometric intuition might fail. In this case, where $m=\infty$, we consider the ellipsoid
$$
 F = \Bigg\{ x \in \ell_2 \colon \sum_{j\in\N} \frac{x_j^2}{\sigma_j^2} \leq 1 \Bigg\}
$$
and the matrix $(g_{ij})_{1\leq i \leq n, j\in \N}$ with independent standard 
Gaussian entries as random information mapping $N_n$.

As a consequence
 of the probabilistic estimates derived in \cite[Theorems 3 and 4]{HKNPU19}, 
one obtains an $\ell_2$-dichotomy showing that random information is useful if and only if $\sigma\in\ell_2$ \cite[Corollary 5]{HKNPU19}.

\begin{thm}[Hinrichs, Krieg, Novak, Prochno, Ullrich]
 \label{cor:l2 not l2}
 If $\sigma\not\in\ell_2$, then $r(N_n)=\sigma_1$
  holds almost surely for all $n\in\N$.
 If $\sigma\in\ell_2$, then
 $$
  \lim_{n\to\infty}\sqrt{n}\,\E[ r(N_n)] = 0.
 $$
\end{thm}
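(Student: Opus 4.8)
The plan is to handle the two cases separately. Write $D=\diag(\sigma_j)$, let $g^{(j)}\in\R^n$ be the $j$-th column of the random matrix $(g_{ij})$, so that $N_n(x)=\sum_j x_j\,g^{(j)}$, and recall from the Hilbert space theory that $r(N_n)=\sup\{\|x\|_2:x\in F,\ N_n(x)=0\}$; since $1/\sigma_j^2\ge1/\sigma_1^2$, every $x\in F$ satisfies $\|x\|_2\le\sigma_1$, giving the trivial upper bound $r(N_n)\le\sigma_1$ in all cases. Assume now $\sigma\notin\ell_2$; fixing $n$, it suffices to produce, almost surely and for every $\eps>0$, some $x\in F$ with $N_n(x)=0$ and $\|x\|_2\ge\sigma_1-\eps$ (the statement for all $n$ then follows from a countable intersection). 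I would search for $x$ proportional to $e_1+w$, with $w$ supported on a finite block $J=\{M+1,\dots,M+K\}$ of late indices, chosen so that $N_n(e_1+w)=g^{(1)}+\sum_{j\in J}w_j g^{(j)}=0$; such $w$ exists a.s. once $K\ge n$, because the $g^{(j)}$, $j\in J$, then span $\R^n$. Among all such $w$ I would pick the minimiser of $\sum_{j\in J}w_j^2/\sigma_j^2$: substituting $w_j=\sigma_j z_j$, it is the minimum norm solution of $\sum_{j\in J}\sigma_j z_j\,g^{(j)}=-g^{(1)}$, so $\sum_{j\in J}w_j^2/\sigma_j^2=\|z\|_2^2=(g^{(1)})^{\top}W_J^{-1}g^{(1)}$ with $W_J:=\sum_{j\in J}\sigma_j^2\,g^{(j)}(g^{(j)})^{\top}$. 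Rescaling, $x:=(1/\sigma_1^2+\|z\|_2^2)^{-1/2}(e_1+w)$ satisfies $x\in F$, $N_n(x)=0$ and $\|x\|_2^2=(1+\|w\|_2^2)(1/\sigma_1^2+\|z\|_2^2)^{-1}\ge(1/\sigma_1^2+\|z\|_2^2)^{-1}$, so the whole case reduces to showing that $\|z\|_2^2$ can be forced below any given $\eps$.

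This is the heart of the argument. Since $\E W_J=c_J\,I_n$ with $c_J:=\sum_{j\in J}\sigma_j^2$, and $\sigma\notin\ell_2$, the scalar $c_J$ can be made as large as we wish, while for a far-out block the largest single weight equals $\sigma_{\min J}^2$ and stays small; a standard $\eps$-net argument on the Euclidean unit sphere of $\R^n$ -- using that each $v^{\top}W_J v=\sum_{j\in J}\sigma_j^2\langle g^{(j)},v\rangle^2$ is a sum of independent $\sigma_j^2\chi_1^2$ variables, to which Bernstein's inequality applies -- then gives $\lambda_{\min}(W_J)\ge c_J/8$ with failure probability at most $\exp\big(Cn-c\,c_J/\sigma_{\min J}^2\big)$. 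Taking disjoint consecutive blocks $J_\ell\subseteq\{n+1,n+2,\dots\}$ with $c_{J_\ell}\ge\ell^2$ (possible because $\sum_j\sigma_j^2=\infty$) and applying the Borel--Cantelli lemma, one obtains almost surely $\lambda_{\min}(W_{J_\ell})\ge\ell^2/8$ for all large $\ell$, hence $\|z^{(\ell)}\|_2^2\le 8\,\|g^{(1)}\|_2^2/\ell^2\to0$; since moreover $\|w^{(\ell)}\|_2^2\le\sigma_1^2\|z^{(\ell)}\|_2^2\to0$, the corresponding $x^{(\ell)}$ have $\|x^{(\ell)}\|_2^2\to\sigma_1^2$. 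This yields $r(N_n)\ge\sigma_1$ a.s., and with $r(N_n)\le\sigma_1$ proves $r(N_n)=\sigma_1$. The main obstacle is exactly this eigenvalue estimate -- forcing $\lambda_{\min}(W_J)$ to blow up along a sequence of blocks -- which is where $\sigma\notin\ell_2$ is used.

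For the second case, $\sigma\in\ell_2$, note that $\sqrt n\,\E[r(N_n)]\le\big(n\,\E[r(N_n)^2]\big)^{1/2}$, so it suffices to prove $n\,\E[r(N_n)^2]\to0$. When $\sigma\in\ell_2$ the operator $GD$ (columns $\sigma_j g^{(j)}$) is bounded, in fact Hilbert--Schmidt, $\ker(GD)$ is a genuine closed subspace of codimension $\le n$, and $r(N_n)=\|D|_{\ker(GD)}\|$. I would then invoke the probabilistic estimates of \cite[Theorems~3 and 4]{HKNPU19} -- obtained via exponential bounds for sums of chi-square variables, Gordon's min--max comparison theorem for Gaussian processes, and bounds on the extreme singular values of structured Gaussian matrices -- either directly in the infinite-dimensional setting or after truncating $\sigma$ to $(\sigma_1,\dots,\sigma_m,0,\dots)$ and letting $m\to\infty$ (the discarded tail contributing at most $\sigma_{m+1}^2$ plus a Hilbert--Schmidt correction that vanishes). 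In particular, for $\sigma\in\ell_2$ these give, with overwhelming probability, a bound of the form $r(N_n)^2\preccurlyeq\sigma_{n+1}^2+\tfrac1n\sum_{k>n}\sigma_k^2$ (consistent with the polynomial-logarithmic family computed above). Taking expectations -- absorbing the negligible complementary event via $r(N_n)\le\sigma_1$ -- and multiplying by $n$ yields $n\,\E[r(N_n)^2]\preccurlyeq n\sigma_{n+1}^2+\sum_{k>n}\sigma_k^2$. Finally, since $(\sigma_j)$ is nonincreasing and lies in $\ell_2$, we have $\tfrac n2\,\sigma_{n+1}^2\le\sum_{\lceil n/2\rceil\le k\le n+1}\sigma_k^2\to0$ and $\sum_{k>n}\sigma_k^2\to0$ as the tail of a convergent series, so the right-hand side tends to $0$, which is the claim. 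Here the delicate point is the cited estimate itself: the supremum over the whole ellipsoid $F$ must be controlled with no spurious logarithmic loss, which is precisely what the Gaussian comparison argument delivers.
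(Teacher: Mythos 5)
Your proposal is correct, and it in fact does more than the paper itself, which offers no proof of this statement: the survey simply quotes the result from \cite[Corollary 5]{HKNPU19} and records that it follows from Theorems 3 and 4 of that reference. For $\sigma\in\ell_2$ you take the same route as the source: reduce to $n\,\E[r(N_n)^2]\to 0$ and invoke the high-probability upper bound of \cite{HKNPU19}. Two minor points there: the bound you quote, $r(N_n)^2\preccurlyeq \sigma_{n+1}^2+\tfrac1n\sum_{k>n}\sigma_k^2$, is slightly more optimistic than what is actually proved (the tail sum there starts at index $cn$ for an absolute constant $c\in(0,1)$ rather than at $n+1$), but this is immaterial since $n\sigma_{cn}^2\preccurlyeq\sum_{cn/2\le k\le cn}\sigma_k^2\to0$ by monotonicity; and your absorption of the complementary event needs its probability to be $o(1/n)$, which the exponential bounds of \cite{HKNPU19} do supply. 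For $\sigma\notin\ell_2$ your argument is genuinely different and essentially self-contained: perturbing $e_1$ by a minimum-energy correction supported on a far-out block $J$, identifying the cost as $(g^{(1)})^{\top}W_J^{-1}g^{(1)}$ with $W_J=\sum_{j\in J}\sigma_j^2 g^{(j)}(g^{(j)})^{\top}$, and driving $\lambda_{\min}(W_J)\to\infty$ along disjoint blocks with $\sum_{j\in J}\sigma_j^2\ge\ell^2$ correctly exhibits kernel elements of norm arbitrarily close to $\sigma_1$; since these witnesses are finitely supported, you also sidestep the question of how to interpret $\ker N_n$ when the operator is a.s.\ unbounded. The one step you should write out is the net argument for $\lambda_{\min}(W_J)\ge c_J/8$: passing from a $1/4$-net to the whole sphere requires a simultaneous upper bound, say $\|W_J\|\le 2c_J$, which follows from the same Bernstein estimate at the cost of another factor $9^n$ in the union bound, so the failure probability $\exp(Cn-c\,c_J/\sigma_1^2)$ and the Borel--Cantelli step survive. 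What your construction buys is a proof of the exact identity $r(N_n)=\sigma_1$ by elementary concentration, without the Gaussian comparison machinery needed for the sharp two-sided estimates of \cite{HKNPU19}.
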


\begin{rem}
Such an $\ell_2$-dichotomy is known from a related problem. There $F$ is the unit ball of a reproducing kernel Hilbert space $H$, 
i.e., $H\subset L_2(D)$ consists of functions 
on a common domain $D$ and
the evaluation functional $f \mapsto f(x)$ is 
a bounded operator on $H$ for every $x\in D$. 
The optimal linear information $N_n$ for the $L_2$-approximation problem is given by the singular value decomposition and has radius $\sigma_{n+1}$. 
This information might be difficult to implement and hence one 
might allow only information $N_n$ of the form 
\[
N_n (f) = \big( f(x_1), \dots , f(x_n)\big)\,, \quad x_1,\dots,x_n\in D.
\]
The goal is to relate the power of 
function evaluations
to the power of all continuous 
linear functionals and one would like to
prove that their power is roughly the same.
Unfortunately,
this is \textit{not} true in general.
When $\sigma \notin \ell_2$
the convergence of optimal algorithms 
that may only use function values
can be arbitrarily slow 
\cite{HNV08}.
The situation is much better if we assume that $\sigma \in \ell_2$.
It was shown in \cite{KWW09} and \cite{WW01} 
that function values are almost as good as general 
linear information. 
We refer to \cite[Chapter 26]{NW12} for a presentation of 
these results. 
\end{rem}

\section*{Acknowledgement}
The authors would like to thank the Isaac Newton Institute for 
Mathematical Sciences for support and hospitality during the programme
`Approximation, sampling and compression in data science' 
when work on this paper was undertaken. 
This work was supported by EPSRC Grant Number EP/R014604/1.

A. Hinrichs and J. Prochno are supported by the Austrian Science Fund (FWF) 
Projects F5509-N26 and F5508-N26, which are part of the Special Research 
Program ``Quasi-Monte Carlo Methods: Theory and Applications''. 
A. Hinrichs was also supported by a grant from the
Simons Foundation.
J. Prochno is also supported by a Visiting Professor Fellowship 
of the Ruhr University Bochum and its Research School PLUS.

\bibliographystyle{plain}

\begin{thebibliography}{10}

\bibitem{Bach17}
F.~Bach.
\newblock On the equivalence between kernel quadrature rules and random feature
  expansions.
\newblock {\em J. Mach. Learn. Res.}, 18: Paper No. 21, 38, 2017.

\bibitem{BDKKW17}
B.~Bauer, L.~Devroye, M.~Kohler, A.~Krzy\.{z}ak, and H.~Walk.
\newblock Nonparametric estimation of a function from noiseless observations at
  random points.
\newblock {\em J. Multivariate Anal.}, 160: 93--104, 2017.

\bibitem{CRT2006}
E.~J. Cand\`es, J.~Romberg, and T.~Tao.
\newblock Robust uncertainty principles: exact signal reconstruction from
  highly incomplete frequency information.
\newblock {\em IEEE Trans. Inform. Theory}, 52(2): 489--509, 2006.

\bibitem{CGLP2012}
D.~Chafa\"{i}, O.~Gu\'{e}don, G.~Lecu\'{e}, and A.~Pajor.
\newblock {\em Interactions between compressed sensing random matrices and high
  dimensional geometry}, volume~37 of {\em Panoramas et Synth\`eses [Panoramas
  and Syntheses]}.
\newblock Soci\'{e}t\'{e} Math\'{e}matique de France, Paris, 2012.

\bibitem{CDK2015}
J.~A. Ch\'{a}vez-Dom\'{i}nguez and D.~Kutzarova.
\newblock Stability of low-rank matrix recovery and its connections to {B}anach
  space geometry.
\newblock {\em J. Math. Anal. Appl.}, 427(1): 320--335, 2015.

\bibitem{chkifa2018}
A.~Chkifa, N.~Dexter, H.~Tran, and Webster~C. G.
\newblock Polynomial approximation via compressed sensing of high-dimensional
  functions on lower sets.
\newblock {\em Math. Comp.}, 87(311): 1415--1450, 2018.

\bibitem{CDdV2009}
A.~Cohen, W.~Dahmen, and R.~DeVore.
\newblock Compressed sensing and best {$k$}-term approximation.
\newblock {\em J. Amer. Math. Soc.}, 22(1): 211--231, 2009.

\bibitem{cohenetal10}
A.~Cohen, R.~DeVore, S.~Foucart, and H.~Rauhut.
\newblock Recovery of functions of many variables via compressive sensing.
\newblock In {\em Proc. SampTA 2011}, Singapore, 2010.

\bibitem{D1953}
D.~A. Darling.
\newblock On a class of problems related to the random division of an interval.
\newblock {\em Ann. Math. Statistics}, 24: 239--253, 1953.

\bibitem{D2006}
D.~L. Donoho.
\newblock Compressed sensing.
\newblock {\em IEEE Trans. Inform. Theory}, 52(4): 1289--1306, 2006.

\bibitem{DUT18}
D.~Dung, V.T. Temlyakov, and T.~Ullrich.
\newblock {\em Hyperbolic Cross Approximation}.
\newblock Advanced Courses in Mathematics - CRM Barcelona. Springer
  International Publishing, 2018.

\bibitem{EGO18}
M.~{Ehler}, M.~{Graef}, and C.~J. {Oates}.
\newblock {Optimal Monte Carlo integration on closed manifolds}.
\newblock {\em arXiv e-prints}, page arXiv: 1707.04723, Jul 2017.
\newblock Statistics and Computing, to appear.

\bibitem{FPRU2010}
S.~Foucart, A.~Pajor, H.~Rauhut, and T.~Ullrich.
\newblock The {G}elfand widths of {$\ell_p$}-balls for {$0<p\leq 1$}.
\newblock {\em J. Complexity}, 26(6): 629--640, 2010.

\bibitem{GG1984}
A.~Yu. Garnaev and E.~D. Gluskin.
\newblock The widths of a {E}uclidean ball.
\newblock {\em Dokl. Akad. Nauk SSSR}, 277(5): 1048--1052, 1984.
\newblock English translation:  Soviet Math. Dokl. 30 (1984), no. 1, 200--204.

\bibitem{G81}
E.~D. Gluskin.
\newblock On some finite-dimensional problems of width theory.
\newblock {\em Physis---Riv. Internaz. Storia Sci.}, 23(2): 5--10, 124, 1981.

\bibitem{G84}
E.~D. Gluskin.
\newblock Norms of random matrices and diameters of finite-dimensional sets.
\newblock {\em Mat. Sb. (N.S.)}, 120(162)(2): 180--189, 286, 1983.

\bibitem{HNWW01}
S.~Heinrich, E.~Novak, G.~W. Wasilkowski, and H.~Wo\'{z}niakowski.
\newblock The inverse of the star-discrepancy depends linearly on the
  dimension.
\newblock {\em Acta Arith.}, 96(3): 279--302, 2001.

\bibitem{HKNPU19}
A.~{Hinrichs}, D.~{Krieg}, E.~{Novak}, J.~{Prochno}, and M.~{Ullrich}.
\newblock {Random sections of ellipsoids and the power of random information}.
\newblock {\em arXiv e-prints}, page arXiv: 1901.06639, Jan 2019.

\bibitem{HNV08}
A.~Hinrichs, E.~Novak, and J.~Vyb\'{i}ral.
\newblock Linear information versus function evaluations for
  {$L_2$}-approximation.
\newblock {\em J. Approx. Theory}, 153(1): 97--107, 2008.

\bibitem{HOeU19}
A.~Hinrichs, J.~Oettershagen, and M.~Ullrich.
\newblock Numerical integration of smooth functions using random point sets.
\newblock In preparation.

\bibitem{HPV2017}
A.~Hinrichs, J.~Prochno, and J.~Vyb\'{i}ral.
\newblock Entropy numbers of embeddings of {S}chatten classes.
\newblock {\em J. Funct. Anal.}, 273(10): 3241--3261, 2017.

\bibitem{KSF16}
M.~{Kanagawa}, B.~K. {Sriperumbudur}, and K.~{Fukumizu}.
\newblock {Convergence guarantees for kernel-based quadrature rules in
  misspecified settings}.
\newblock {\em arXiv e-prints}, page arXiv: 1605.07254, May 2016.
\newblock 29th Conference on Neural Information Processing Systems (NIPS 2016),
  Barcelona, Spain.

\bibitem{Ka1974}
B.~S. Ka\v{s}in.
\newblock Kolmogorov diameters of octahedra.
\newblock {\em Dokl. Akad. Nauk SSSR}, 214: 1024--1026, 1974.

\bibitem{Ka1977}
B.~S. Ka\v{s}in.
\newblock The widths of certain finite-dimensional sets and classes of smooth
  functions.
\newblock {\em Izv. Akad. Nauk SSSR Ser. Mat.}, 41(2): 334--351, 478, 1977.
\newblock English translation:  Math. USSR-Izv. 11 (1977), no. 2, 317--333
  (1978).

\bibitem{K19}
D.~Krieg.
\newblock {\em Algorithms and Complexity of some Multivariate Problems}.
\newblock PhD thesis, Friedrich Schiller University Jena, 2019.

\bibitem{KWW09}
F.~Y. Kuo, G.~W. Wasilkowski, and H.~Wo\'{z}niakowski.
\newblock On the power of standard information for multivariate approximation
  in the worst case setting.
\newblock {\em J. Approx. Theory}, 158(1): 97--125, 2009.

\bibitem{LPW09}
D.~A. Levin, Y.~Peres, and E.~L. Wilmer.
\newblock {\em Markov chains and mixing times}.
\newblock American Mathematical Society, Providence, RI, 2009.
\newblock With a chapter by James G. Propp and David B. Wilson.

\bibitem{NT06}
E.~Novak and H.~Triebel.
\newblock Function spaces in {L}ipschitz domains and optimal rates of
  convergence for sampling.
\newblock {\em Constr. Approx.}, 23(3): 325--350, 2006.

\bibitem{NW08}
E.~Novak and H.~Wo\'{z}niakowski.
\newblock {\em Tractability of multivariate problems. {V}ol.~1:  {L}inear
  information}, volume~6 of {\em EMS Tracts in Mathematics}.
\newblock European Mathematical Society (EMS), Z\"{u}rich, 2008.

\bibitem{NW09}
E.~Novak and H.~Wo\'{z}niakowski.
\newblock Approximation of infinitely differentiable multivariate functions is
  intractable.
\newblock {\em J. Complexity}, 25(4): 398--404, 2009.

\bibitem{NW10}
E.~Novak and H.~Wo\'{z}niakowski.
\newblock {\em Tractability of multivariate problems. {V}olume {II}:  {S}tandard
  information for functionals}, volume~12 of {\em EMS Tracts in Mathematics}.
\newblock European Mathematical Society (EMS), Z\"{u}rich, 2010.

\bibitem{NW12}
E.~Novak and H.~Wo\'{z}niakowski.
\newblock {\em Tractability of multivariate problems. {V}olume {III}: 
  {S}tandard information for operators}, volume~18 of {\em EMS Tracts in
  Mathematics}.
\newblock European Mathematical Society (EMS), Z\"{u}rich, 2012.

\bibitem{SW98}
I.~H. Sloan and H.~Wo\'{z}niakowski.
\newblock When are quasi-{M}onte {C}arlo algorithms efficient for
  high-dimensional integrals?
\newblock {\em J. Complexity}, 14(1): 1--33, 1998.

\bibitem{Su78}
A.~G. Sukharev.
\newblock An optimal method for the construction of the best uniform
  approximations for a certain class of functions.
\newblock {\em \v{Z}. Vy\v{c}isl. Mat. i Mat. Fiz.}, 18(2): 302--313, 523, 1978.
\newblock English translation: U.S.S.R. Comput. Math. Math. Phys. 18 (1978), no. 2, 21--31.

\bibitem{Su79}
A.~G. Sukharev.
\newblock Optimal formulas of numerical integration for some classes of
  functions of several variables.
\newblock {\em Dokl. Akad. Nauk SSSR}, 246(2): 282--285, 1979.
\newblock English translation:  Soviet Math. Dokl. 20 (1979), no. 3, 472--475.

\bibitem{TWW88}
J.~F. Traub, G.~W. Wasilkowski, and H.~Wo\'{z}niakowski.
\newblock {\em Information-based complexity}.
\newblock Computer Science and Scientific Computing. Academic Press, Inc.,
  Boston, MA, 1988.
\newblock With contributions by A. G. Werschulz and T. Boult.

\bibitem{WW01}
G.~W. Wasilkowski and H.~Wo\'{z}niakowski.
\newblock On the power of standard information for weighted approximation.
\newblock {\em Found. Comput. Math.}, 1(4): 417--434, 2001.

\bibitem{W15}
H.~Weyhausen.
\newblock {\em Expected Discrepancies}.
\newblock PhD thesis, Friedrich Schiller University Jena, 2015.

\end{thebibliography}

\end{document}